\title{}
\author{A. Salch}
\title{Ravenel's May spectral sequence collapses immediately at large primes.}
\date{November 2023.}
\DeclareMathOperator{\strict}{{\rm str}}
\DeclareMathOperator{\Aut}{{\rm Aut}}
\DeclareMathOperator{\Gal}{{\rm Gal}}
\DeclareMathOperator{\Cotor}{{\rm Cotor}}
\newcommand{\floor}[1]{\lfloor {#1}\rfloor}
\DeclareMathOperator{\Spec}{{\rm Spec}}
\DeclareMathOperator{\Ext}{{\rm Ext}}
\DeclareMathOperator{\lowert}{{\it t}}
\DeclareMathOperator{\lowerb}{{\it b}}
\DeclareMathOperator{\Tor}{{\rm Tor}}
\DeclareMathOperator{\tensor}{\otimes}
\theoremstyle{plain}
\newtheorem{prop}{Proposition}[section]
\newtheorem{theorem}[prop]{Theorem}
\newtheorem{corollary}[prop]{Corollary}
\newtheorem{lemma}[prop]{Lemma}
\newtheorem{definition}[prop]{Definition}
\newtheorem{definition-proposition}[prop]{Definition-Proposition}
\newtheorem{definition-theorem}[prop]{Definition-Theorem}
\theoremstyle{definition}
\newtheorem{remark}[prop]{Remark}
\newtheorem{question}[prop]{Question}
\newtheorem{observation}[prop]{Observation}
\begin{document}
\begin{abstract}
At large primes, the height $n$ Ravenel-May spectral sequence takes as input the cohomology of a certain solvable Lie $\mathbb{F}_p$-algebra, and produces as output the mod $p$ cohomology of the height $n$ strict Morava stabilizer group scheme. We construct simultaneous integral deformations of the height $n$ Morava stabilizer algebras and related objects, and we use them to prove that, for fixed $n$, the height $n$ Ravenel-May spectral sequence collapses for all sufficiently large primes $p$. Consequently, for large $p$, the mod $p$ cohomology of the strict Morava stabilizer group scheme is the cohomology of a finite-dimensional solvable Lie algebra, and is computable algorithmically.
\end{abstract}
\maketitle

\section{Introduction.}
\label{Introduction.}

Let $n$ be a positive integer. Let $p$ be a prime number satisfying $p>n+1$.
In the 1977 paper \cite{ravenel1977cohomology}, Ravenel used the methods of May's thesis \cite{MR2614527} to construct a spectral sequence
\begin{align}
\label{rmss 0} E_1^{s,t,u} \cong H^{s,t,u}(L(n,n);\mathbb{F}_p) &\Rightarrow H^{s,t}_c(\strict\Aut(\mathbb{G}_{1/n});\mathbb{F}_p),
\end{align}
which we call the {\em $p$-primary height $n$ Ravenel-May spectral sequence}, or RMSS for short.
Here $L(n,n)$ is a particular $n^2$-dimensional solvable Lie $\mathbb{F}_p$-algebra, and $\strict\Aut(\mathbb{G}_{1/n})$ is the height $n$ strict Morava stabilizer group scheme\footnote{Some readers may be much less familiar with the Morava stabilizer group {\em scheme} than with the Morava stabilizer {\em group}. We give some background about the Morava stabilizer group scheme in \cref{Review of Ravenel...}, but the upshot is the following three points: 
1. The Morava stabilizer group scheme is the prime spectrum of the Hopf algebra $S(n)$, the Morava stabilizer algebra. In this paper, we usually write $S(n,p)$ rather than the more common notation $S(n)$, to emphasize the role of the prime $p$. The cohomology of the Morava stabilizer group scheme is simply $\Cotor$ over $S(n,p)$. 
2. In every situation in stable homotopy in which one considers the cohomology of the Morava stabilizer group, it is at least as good to consider the cohomology of the Morava stabilizer group scheme---and sometimes it is even a bit better, in the sense that it sometimes allows us to skip a descent spectral sequence for a $\Gal(\mathbb{F}_{p^n}/\mathbb{F}_p)$-action. 3. The main results of this paper remain true after replacing the Morava stabilizer group scheme with the Morava stabilizer group, throughout.}, i.e., the strict automorphism group scheme of a height $n$ one-dimensional formal group law over $\mathbb{F}_p$. 
The $E_1$-page of the RMSS is the cohomology of the Lie algebra $L(n,n)$, while the abutment of the RMSS is the continuous cohomology of the profinite group scheme $\strict\Aut(\mathbb{G}_{1/n})$. The latter is the input for many spectral sequences which are used to calculate stable homotopy groups. For example:
\begin{itemize}
\item From $H^*_c(\strict\Aut(\mathbb{G}_{1/n});\mathbb{F}_p)$, one runs a sequence of $n$ Bockstein spectral sequences to get the height $n$ line in the chromatic spectral sequence $E_1$-page. The chromatic spectral sequence converges to the Adams-Novikov $E_2$-page, and the Adams-Novikov spectral sequence converges to the stable homotopy groups of spheres. See chapters 4-6 of \cite{MR860042} for this material.
\item From $H^*_c(\strict\Aut(\mathbb{G}_{1/n});\mathbb{F}_p)$, one runs a sequence of $n$ Bockstein spectral sequences (which are dual, in a certain sense, of the $n$ Bockstein spectral sequences mentioned just above) to get to $H^*_c(\strict\Aut(\mathbb{G}_{1/n});E(\mathbb{G}_{1/n})_*)$, where $E(\mathbb{G}_{1/n})_*$ is the Morava $E$-theory spectrum of $\mathbb{G}_{1/n}$. The continuous cohomology $H^*_c(\strict\Aut(\mathbb{G}_{1/n});E(\mathbb{G}_{1/n})_*)$ is the input for spectral sequences converging to $\pi_*(L_{K(n)}S^0)$, the $K(n)$-local stable homotopy groups of spheres. See \cite{MR2030586} for this material.
\item If the $p$-local Smith-Toda complex $V(n-1)$ exists, then $H^*_c(\strict\Aut(\mathbb{G}_{1/n});\mathbb{F}_p)\otimes_{\mathbb{F}_p}\mathbb{F}_p[v_n^{\pm 1}]$ is the input for a spectral sequence converging to the $K(n)$-local stable homotopy groups of $V(n-1)$. If $p > \frac{n^2+n+2}{2}$, then this spectral sequence collapses immediately with no differentials, yielding an isomorphism $H^*_c(\strict\Aut(\mathbb{G}_{1/n});\mathbb{F}_p)\otimes_{\mathbb{F}_p}\mathbb{F}_p[v_n^{\pm 1}]\cong \pi_*(L_{K(n)}V(n-1))$. See Corollary \ref{smith-toda cor} for further explanation.
\end{itemize}
Because of the applications of $H^*_c(\strict\Aut(\mathbb{G}_{1/n});\mathbb{F}_p)$ in computational stable homotopy theory, it is of great interest to know an answer to the following question:
\begin{question}\label{main q}
For which values of $p$ and $n$ does the RMSS collapse immediately?
\end{question}
When the RMSS collapses immediately, $H^*_c(\strict\Aut(\mathbb{G}_{1/n});\mathbb{F}_p)$ is simply the cohomology of a certain solvable finite-dimensional Lie algebra, computable (algorithmically!) via a Chevalley-Eilenberg complex \cite{MR0024908}.

Since the Ravenel-May spectral sequence only has the stated\footnote{When $p\leq n+1$, a Ravenel-May spectral sequence still exists, and still converges to $H^{*}_c(\strict\Aut(\mathbb{G}_{1/n});\mathbb{F}_p)$, but its $E_1$-page is more complicated than described above in \eqref{rmss 0}. Its $E_1$-page is the {\em restricted} cohomology of a certain {\em restricted} Lie algebra which is larger than $L(n,n)$. See Theorem 1.5 of \cite{ravenel1977cohomology} or Theorem 6.3.4 of \cite{MR860042}.} form \eqref{rmss 0} when $p>n+1$, the most optimistic possible answer for Question \ref{main q} would be: the Ravenel-May spectral sequence collapses immediately as long as $p>n+1$. Let us call this the {\em optimistic answer} to Question \ref{main q}.

Indeed, in Theorem 1.7 of \cite{ravenel1977cohomology}, Ravenel claims that this optimistic answer is true. However, Ravenel later pointed out \cite[section 6.3]{MR860042} that the argument given for the optimistic answer in \cite{ravenel1977cohomology} is incorrect. In \cref{Review of Ravenel...}, below, we review Ravenel's original argument for the optimistic answer. In \cite{MR860042}, Ravenel also describes a possibly nonzero differential in the $p=11,n=9$ case of the Ravenel-May spectral sequence which he suggests may yield a counterexample to the optimistic answer.

Question \ref{main q} remains wide open. In particular, it is today not even known whether the optimistic answer is true for $n>3$. From explicit calculation, it is known that the optimistic answer is indeed correct for $n=1,2$ \cite[section 6.3]{MR860042} and $n=3$ \cite[section 5]{MR1173994} (see also \cite{MR4301320} for a simplification of the argument from \cite{MR1173994}, at $p\geq 7$). In each of those cases, one verifies the correctness of the optimistic answer by explicitly calculating $H^*(L(n,n);\mathbb{F}_p)$, then checking that $H^*(L(n,n);\mathbb{F}_p)$ is generated as a ring by elements which, by routine inspection of tridegrees, cannot support a nonzero Ravenel-May differential. This strategy for answering Question \ref{main q} does not generalize well to arbitrary values of $n$, since the computational cost of calculating $H^*(L(n,n);\mathbb{F}_p)$ increases rapidly\footnote{We offer a little bit of data about the difficulty of calculating $H^*(L(n,n);\mathbb{F}_p)$. The brute-force approach is to simply calculate the cohomology of the Chevalley-Eilenberg complex $\Lambda^{\bullet}(L(n,n)^*)$ of $L(n,n)$. This is a finite calculation, and in principle a computer can attempt this, and it can succeed for low values of $n$. However, since $L(n,n)$ is $n^2$-dimensional, the Chevalley-Eilenberg complex is $2^{(n^2)}$-dimensional. This grows extremely quickly as $n$ grows, and exhausts any feasible supply of RAM if $n>5$.

The cohomology of $L(n,n)$ also grows quickly as $n$ grows. For $p>n+1$, the total $\mathbb{F}_p$-linear dimension of $H^*(L(n,n);\mathbb{F}_p)$ and of the Chevalley-Eilenberg complex is given by the following table:
\begin{equation*}
\begin{array}{lll}
\mbox{n}          & \dim_{\mathbb{F}_p}H^*(L(n,n); \mathbb{F}_p) & \dim_{\mathbb{F}_p}\Lambda^{\bullet}(L(n,n)^*) \\
1 & 2 & 2 \\
2 & 12 & 16 \\ 
3 & 152 & 512 \\
4 & 3440 & 65536 \end{array}\end{equation*}
The cohomology $H^*(L(n,n);\mathbb{F}_p)$ is not known for $n>4$. The $n=4$ case is in the preprint \cite{height4}, where it is also observed that these $\mathbb{F}_p$-linear dimensions agree with the coefficients in a particular generating function. If the coefficients of that generating function continue (for higher $n$) to agree with the $\mathbb{F}_p$-linear dimension of $H^*(L(n,n);\mathbb{F}_p)$, then, for example, $H^*(L(5,5);\mathbb{F}_p)$ will be $128512$-dimensional, while $H^*(L(6,6);\mathbb{F}_p)$ will be $7621888$-dimensional.

In the preprint \cite{height4}, a ``height-shifting'' approach is used to calculate $H^*(L(4,4);\mathbb{F}_p)$. This approach is much more efficient and illuminating than brute-force calculation of the cohomology of the Chevalley-Eilenberg complex, but it is still quite demanding, increasingly so as $n$ grows.} as $n$ grows.

In this paper, we give an {\em asymptotic} answer to Question \ref{main q}. The main results are:
\newtheorem*{unnumberedthm:main1}{Theorem \ref{main thm 1}}
\begin{unnumberedthm:main1}
Fix a positive integer $n$. Then there exists some integer $N_n$ such that, for all $p>N_n$, the $p$-primary height $n$ Ravenel-May spectral sequence has no nonzero differentials.
\end{unnumberedthm:main1}
\newtheorem*{unnumberedthm:main2}{Theorem \ref{main thm 2}}
\begin{unnumberedthm:main2}
Fix a positive integer $n$. Then there exists some integer $N$ such that, for all $p>N$, the mod $p$ continuous cohomology $H^*_c(\strict\Aut(\mathbb{G}_{1/n});\mathbb{F}_p)$ of the $p$-primary height $n$ strict Morava stabilizer group scheme is isomorphic, as a graded ring, to the cohomology of the solvable Lie $\mathbb{F}_p$-algebra $L(n,n)$.
\end{unnumberedthm:main2}
The most obvious corollary is that the mod $p$ cohomology of the height $n$ strict Morava stabilizer group scheme is algorithmically computable for $p>>n$.
We offer a few other corollaries as well. 
If the Smith-Toda complex $V(n-1)$ exists for all sufficiently large $p$, then 
Corollary \ref{smith-toda cor} gives us that the $K(n)$-local stable homotopy groups of $V(n-1)$ are isomorphic to the Lie algebra cohomology $H^*(L(n,n);\mathbb{F}_p)\otimes_{\mathbb{F}_p}\mathbb{F}_p[v_n^{\pm 1}]$, for $p>>n$.

Another corollary of our main theorems is Corollary \ref{new ss cor}: suppose that $n$ is a positive integer, and suppose that $p$ is a prime larger than the integer $N_n$ described in the statement of Theorem \ref{main thm 1}, as described above. Suppose that $X$ is an $E(n-1)$-acyclic finite CW-complex. Then there exist strongly convergent spectral sequences
\begin{align}
\label{new ss 1a} E_1^{*,*,*} \cong H^*(L(n,n);\mathbb{F}_p)\otimes_{\mathbb{F}_p} E_0E(n)_*(X)
  &\Rightarrow \Cotor_{E(n)_*E(n)}^{*,*}(E(n)_*,E(n)_*(X)),\\
\label{En-ass 1a} E_2^{*,*} \cong \Cotor_{E(n)_*E(n)}^{*,*}(E(n)_*,E(n)_*(X))
  &\Rightarrow \pi_*(L_{E(n)}X),
\end{align}
where $E_0E(n)_*(X)$ is the associated graded abelian group of the $p$-adic filtration 
\[ E(n)_*(X) \supseteq pE(n)_*(X)\supseteq p^2E(n)_*(X) \supseteq p^3E(n)_*(X)\supseteq \dots .
\]
Of course \eqref{En-ass 1a} is not new: it is simply the $E(n)$-Adams spectral sequence of $X$. The new part is spectral sequence \eqref{new ss 1a}, which together with \eqref{En-ass 1a}, gives us a means to pass from the cohomology of a solvable Lie algebra to the $E(n)$-local stable homotopy groups of a finite $E(n-1)$-acyclic CW-complex.

We think that the methods used to prove our main results are themselves of interest. The basic strategy is to find {\em integral deformations} of the objects involved in the construction of the Ravenel-May spectral sequence. Here is a sketch of the steps involved:
\begin{itemize}
\item For each positive integer $n$, in \cref{An integral lift of Ravenel...} we construct a Lie $\mathbb{Z}$-algebra $L_{int}(n)$, free over $\mathbb{Z}$, whose mod $p$ reduction for each $p$ is equal to the Lie $\mathbb{F}_p$-algebra $L(n,n)$. 
\item The $\mathbb{Z}$-linear dual of the universal enveloping $\mathbb{Z}$-algebra of $L_{int}(n)$ is a divided power $\mathbb{Z}$-algebra. Inside this divided power algebra, there is a natural polynomial Hopf $\mathbb{Z}$-algebra, which we call $\mathcal{Z}(n)$. 
\item We filter $\mathcal{Z}(n)$ in such a way that, for $p>>n$, upon reducing modulo $p$ and modulo all the $p$th powers of the polynomial generators, we recover Ravenel's filtration on $S(n,p)$. This filtration, and the definition of $\mathcal{Z}(n)$ itself, are in \cref{The Hopf algebra...}.
\item In \cref{The Hopf algebra...} we also show that the resulting spectral sequence 
\begin{align}\label{integral rmss 0}\Cotor_{\mathbb{F}_p\otimes_{\mathbb{Z}}E_0\mathcal{Z}(n)}^*(\mathbb{F}_p,\mathbb{F}_p) &\Rightarrow \Cotor_{\mathbb{F}_p\otimes_{\mathbb{Z}}\mathcal{Z}(n)}^*(\mathbb{F}_p,\mathbb{F}_p)\end{align} collapses immediately.
\item In \cref{RMSS collapse section}, we construct a set of cocycles in the cobar complex of $E_0\mathcal{Z}(n)$ which map, for $p>>n$, to an $\mathbb{F}_p$-linear basis for $\Cotor_{E_0S(n,p)}^*(\mathbb{F}_p,\mathbb{F}_p)\cong H^*(L(n,n);\mathbb{F}_p)$. 
\item We then show that these cocycles survive the integral Ravenel-May spectral sequence \eqref{integral rmss 0}, and that their images in the Ravenel-May spectral sequence \eqref{rmss 0} must also survive. Hence, for $p>>n$, all elements of $H^*(L(n,n);\mathbb{F}_p)$ must survive the Ravenel-May spectral sequence, i.e., the spectral sequence collapses immediately with no differentials.
\end{itemize}
One corollary is that the cohomology of our $n^2$-dimensional Lie $\mathbb{Z}$-algebra $L_{int}(n)$ has, upon reduction modulo $p$, the same cohomology as $H^*_c(\strict\Aut(\mathbb{G}_{1/n};\mathbb{F}_p)$ for $p>>n$. In that sense, if one makes the single calculation of the Lie algebra cohomology of $L_{int}(n)$, then one knows the mod $p$ cohomology of the height $n$ strict Morava stabilizer group scheme for all but finitely many $p$. We demonstrate with explicit integral calculations for $n=1$ and $n=2$ in \cref{examples section}.

We also define an integral deformation $S_{int}(n)^{\bullet}$ of the cobar complex of the Morava stabilizer algebra $S(n,p)$ itself. However, to prove the main results, the smaller object $\mathcal{Z}(n)$ (whose cobar complex is a sub-DGA of $S_{int}(n)^{\bullet}$) suffices. Since it is not used in the proofs of the main results, we consign the definition of $S_{int}(n)^{\bullet}$ to an appendix.

We remark that Theorems \ref{main thm 1} and \ref{main thm 2} are used in our joint work with Mohammad Behzad Kang, currently in preparation, in which we show that the cohomology of the height $n$ {\em full} Morava stabilizer group, with trivial mod $p$ coefficients, is isomorphic to the cohomology $H^*(U(n);\mathbb{F}_p)$ of the unitary group for all $p>>n$. 

The author thanks D. Ravenel for many valuable discussions about chapter 6 of \cite{MR860042} when the author was a student.

\subsection{Conventions.}

\begin{itemize}
\item The symbol $C_n$ denotes a cyclic group of order $n$. Throughout, we fix a choice of generator $\sigma$ for $C_n$.
\item The symbol $\floor{i}$ denotes the integer floor of a rational number $i$.
\item For each prime $p$ and each positive integer $n$, the height $n$ Morava stabilizer algebra is a certain Hopf $\mathbb{F}_p$-algebra. It is denoted $S(n)$ in the standard reference, \cite{MR860042}, but this suppresses the choice of prime $p$ from the notation. It will be convenient to include the choice of $p$ in the notation for the Morava stabilizer algebra, so we will write $S(n,p)$ rather than $S(n)$. 
\item Throughout, all formal group laws will be implicitly understood to be one-dimensional.
\item Given a prime number $p$ and a positive integer $n$, we use the symbol $\mathbb{G}_{1/n}$ for the $p$-typical height $n$ formal group law over $\mathbb{F}_p$ classified by the ring map $BP_*\rightarrow \mathbb{F}_p$ sending the Hazewinkel generator $v_n\in BP_*$ to $1$ and sending the other Hazewinkel generators $v_i$ to zero. (Sometimes this formal group law is called the ``height $n$ Honda formal group law.'') The symbol $\mathbb{G}_{1/n}$ comes from the Dieudonn\'{e}-Manin classification \cite{MR0157972} of $p$-divisible groups over a separably closed perfect field.
\item In this paper we will need to consider the cohomology of restricted Lie algebras and also of their underlying unrestricted Lie algebras over fields of positive characteristic. We write $H^*_{res}(\mathfrak{g},\rho)$ and $H^*(\mathfrak{g},\rho)$ for the restricted Lie algebra cohomology $\Ext_{V\mathfrak{g}}^*(k,\rho)$ and the unrestricted Lie algebra cohomology $\Ext_{U\mathfrak{g}}^*(k,\rho)$, respectively, with coefficients in a representation $\rho$ of $\mathfrak{g}$. 
\end{itemize}

\section{Review of Ravenel's Lie algebras and associated spectral sequences.}
\label{Review of Ravenel...}

This section is a review of known results, for readers unfamiliar with Ravenel's computational approach to the cohomology of the Morava stabilizer algebras. Readers already familiar with the ideas from chapter 6 of \cite{MR860042} can skip ahead to \cref{An integral lift of Ravenel...}.

The  $p$-primary height $n$ Morava stabilizer algebra $S(n,p)$ is isomorphic, as a $\mathbb{Z}/2(p^n-1)\mathbb{Z}$-graded $\mathbb{F}_p$-algebra, to 
\begin{equation}\label{snp pres 1} \mathbb{F}_p[t_1, t_2, \dots ]/(t_i^{p^n}-t_i\ \forall i),\end{equation} with $t_i$ in degree $2(p^i-1)$ \cite{MR0420619}. 
The algebra $S(n,p)$ is a $C_n$-equivariant commutative Hopf algebra over $\mathbb{F}_p$, where $C_n$ acts by the Frobenius map. The prime spectrum $\Spec S(n,p)$ represents a profinite group scheme, the strict automorphism group scheme of the $p$-height $n$ formal group law $\mathbb{G}_{1/n}$. This group scheme is pro-\'{e}tale, i.e., after a separable base change\footnote{In fact, base change to $\mathbb{F}_{p^n}$ suffices.}, the Hopf algebra $S(n,p)$ becomes isomorphic to the continuous linear dual of the group ring of a profinite group---an actual {\em group}, not just a group scheme. This profinite group is a pro-$p$-group, isomorphic to a maximal pro-$p$-subgroup of the group of units in the maximal order in an invariant $1/n$ central division algebra over $\mathbb{Q}_p$, and isomorphic also to the strict automorphism group of any height $n$ formal group law over $\overline{\mathbb{F}}_p$.

In section 1 of \cite{ravenel1977cohomology} (see chapter 6 of \cite{MR860042} for a textbook reference), Ravenel puts an increasing filtration on the Morava stabilizer algebra $S(n,p)$ as follows:
\begin{itemize}
\item Let $d_{n,i}$ be the integer defined recursively by the rule 
\begin{align*}
 d_{n,i} &= \left\{ \begin{array}{ll} 0 &\mbox{\ if\ } i\leq 0 \\ \max\{ i,pd_{n,i-n}\} &\mbox{\ if\ } i>0.\end{array}\right. \end{align*}
\item Let the filtration degree of $t_i^{p^j}$ be $d_{n,i}$ for each $j$. More generally, let the filtration degree of $t_i^k$ be $d_{n,i}$ times the sum of the coefficients in the base $p$ expansion of $k$. The filtration is otherwise multiplicative, e.g. the filtration degree of $t_it_j$ is equal to $d_{n,i}$ times $d_{n,j}$, if $i\neq j$.
\end{itemize}
The associated graded Hopf algebra $E_0S(n,p)$ is isomorphic, as a graded Hopf algebra, to $\mathbb{F}_p[t_{i,j}: i\geq 1, j\in\mathbb{Z}/n\mathbb{Z}]/t_{i,j}^p$, with $t_{i,j}$ in degree $2(p^i-1)p^j$. The element $t_{i,j}$ of $E_0S(n,p)$ represents the element $t_i^{p^j}$ of $S(n,p)$.

Ravenel's filtration has the agreeable property that its associated graded Hopf algebra $E_0S(n,p)$ is the linear dual of a primitively generated Hopf algebra, so the methods of Milnor--Moore \cite{MR0174052} and May's thesis \cite{MR2614527} can be applied. In particular, $E_0S(n,p)$ is dual to the restricted enveloping algebra $VP\left( E_0S(n,p)^*\right)$ of the restricted Lie $\mathbb{F}_p$-algebra $P\left( E_0S(n,p)^*\right)$ of primitives in the dual Hopf algebra $E_0S(n,p)^*$. In \cite{MR0193126},\cite{MR2614527}, and \cite{MR0185595}, May constructed a spectral sequence
\begin{align}
\label{lmss}  E_1^{*,*,*} \cong H^*(\mathfrak{g},k) \otimes_k P(\mathfrak{g}^*) &\Rightarrow H^*_{res}(\mathfrak{g},k)\\
\nonumber d_r: E_r^{s,t,u} &\rightarrow E_r^{s+1,t+r,u}
\end{align}
for every graded restricted Lie algebra $\mathfrak{g}$, concentrated in even degrees, over a field $k$ of characteristic $p$. Here $P(\mathfrak{g}^*)$ is the free commutative $k$-algebra on the dual $k$-vector space of $\mathfrak{g}$. 
The grading is as follows: elements of $H^s(\mathfrak{g},k)$ in internal\footnote{We follow the standard convention of referring to the grading on $H^*(\mathfrak{g},k)$ induced by the grading on $\mathfrak{g}$ itself as the {\em internal} grading.} grading $u$ are in tridegree $(s,0,u)$. Nonzero elements of $\mathfrak{g}^*\subseteq P(\mathfrak{g}^*)$ of internal degree $u$ are in tridegree $(2,1,pu)$.
Elements in tridegree $(s,t,u)$ contribute, in the abutment, to elements of $H^s_{res}(\mathfrak{g},k)$ of internal degree $u$.

Spectral sequence \eqref{lmss} was one of two new spectral sequences studied in May's thesis. In the literature, both spectral sequences have at times been called ``the May spectral sequence.'' To resolve the ambiguity in naming, we will refer to \eqref{lmss} as the {\em Lie-May spectral sequence}, since it is the spectral sequence from May's thesis which relates restricted Lie algebra cohomology to unrestricted Lie algebra cohomology. (The other spectral sequence studied in May's thesis was the spectral sequence of a Hopf algebra, and particularly the $2$-primary Steenrod algebra, filtered by powers of its augmentation ideal.)

In Theorem 1.6 of \cite{ravenel1977cohomology} (cf. \cite[Theorem 6.3.5]{MR860042}), Ravenel shows that spectral sequence \eqref{lmss} admits a tensor splitting in the case that $\mathfrak{g} = P\left( E_0S(n,p)^*\right)$. One tensor factor converges to $\mathbb{F}_p$ concentrated in tridegree $(0,0,0)$, while the other tensor factor has $E_1$-term isomorphic to 
\begin{align*}
 H^*\left(L\left( n,\floor{\frac{pn}{p-1}}\right);\mathbb{F}_p\right) \otimes_{\mathbb{F}_p} P\left(b_{i,j}: 1\leq i\leq \floor{\frac{n}{p-1}}, j\in \mathbb{Z}/n\mathbb{Z}\right),
\end{align*}
where the notation and gradings are as follows:
\begin{itemize}
\item 
$L\left(n,\floor{\frac{pn}{p-1}}\right)$ is a certain $n\floor{\frac{pn}{p-1}}$-dimensional quotient Lie algebra of $P\left( E_0S(n,p)^*\right)$,
\item elements of $H^s\left(L\left( n,\floor{\frac{pn}{p-1}}\right);\mathbb{F}_p\right)$ in internal grading $u$ are in tridegree $(s,0,u)$,
\item and the polynomial generator $b_{i,j}$ is in tridegree $(2,1,2p^{i+1}(p^j-1))$. 
\end{itemize}
Consequently Ravenel gets a spectral sequence
\begin{align}
\label{lmss 2}  E_1^{*,*,*} &\cong H^*\left(L\left( n,\floor{\frac{pn}{p-1}}\right);\mathbb{F}_p\right) 
 \otimes_{\mathbb{F}_p} P\left(b_{i,j}: 1\leq i\leq \floor{\frac{n}{p-1}}, j\in \mathbb{Z}/n\mathbb{Z}\right) \\
\nonumber &\Rightarrow H^*_{res}\left(P\left( E_0S(n,p)^*\right); \mathbb{F}_p\right) \\
\nonumber &\cong \Cotor^*_{E_0S(n,p)}(\mathbb{F}_p,\mathbb{F}_p) \\
\nonumber d_r: E_r^{s,t,u} &\rightarrow E_r^{s+1,t+r,u}.
\end{align}
We will refer to \eqref{lmss 2} as the {\em Ravenel-Lie-May spectral sequence,} or RLMSS for short.

It will be useful to have an explicit description of the Lie $\mathbb{F}_p$-algebra $L(n,m)$ for positive integers $n,m$. 
In Theorem 1.4 of \cite{ravenel1977cohomology} (cf. \cite[Theorem 6.3.3]{MR860042}), Ravenel shows that the restricted Lie $\mathbb{F}_p$-algebra $P(E_0S(n,p)^*)$ has $\mathbb{F}_p$-linear basis
$\left\{ x_{i,j} : i\geq 1, j\in \mathbb{Z}/n\mathbb{Z}\right\},$
with Lie bracket 
\begin{align} 
\label{lie bracket 1} [x_{i,j},x_{k,\ell}] &= \left\{ \begin{array}{ll} \delta_{i+j}^{\ell} x_{i+k,j} - \delta_{k+\ell}^j x_{i+k,\ell} &\mbox{\ if\ } i+k\leq \floor{\frac{pn}{p-1}} \\ 0 &\mbox{\ otherwise} \end{array}\right. .
\end{align}
The Lie $\mathbb{F}_p$-algebra $L(n,m)$ is defined to be the quotient of $P(E_0S(n,p)^*)$ by the linear span of the elements $x_{i,j}$ satisfying $i > m$. 

We also have the spectral sequence 
\begin{align}
\label{rmss}  E_1^{*,*,*} &\cong H^*_{res}\left(P\left( E_0S(n,p)^*\right); \mathbb{F}_p\right) \\
\nonumber &\cong \Cotor^*_{E_0S(n,p)}(\mathbb{F}_p,\mathbb{F}_p) \\ 
\nonumber &\Rightarrow \Cotor^*_{S(n,p)}(\mathbb{F}_p,\mathbb{F}_p) \\
\label{rmss diff} d_r: E_r^{s,t,u} &\rightarrow E_r^{s+1,t,u-r}
\end{align}
arising from applying Ravenel's filtration to the cobar complex of $S(n,p)$. Here $s$ is the cohomological degree, $t$ the internal degree, and $u$ the Ravenel filtration degree. 
As in \cref{Introduction.}, we will call \eqref{rmss} the {\em Ravenel-May spectral sequence,} or RMSS for short. 

If $p>n+1$, then $\floor{\frac{pn}{p-1}} = n$, and consequently the polynomial factor in the $E_1$-term of the RLMSS is trivial. Hence the RLMSS collapses, yielding an isomorphism of graded rings $H^*\left(L(n,n);\mathbb{F}_p\right) \cong H^*_{res}\left(P\left( E_0S(n,p)^*\right); \mathbb{F}_p\right)$. This is why, for $p>n+1$, the $E_1$-term of the RMSS is simply the cohomology of the Lie algebra $L(n,n)$, as described above in \eqref{rmss 0}.

The RMSS is known to have nonzero differentials for some values of $p$ and $n$ satisfying $p\leq n+1$. For example, in the case $p=2, n=3$, we have the nonzero Ravenel-May differential $d_1(b_{20}) = h_{11}b_{11} + h_{12}b_{10}$, as in\footnote{To be clear, in \cite{MR860042}, the indexing on the RMSS is slightly different from the indexing given above in \eqref{rmss diff}. In the indexing used in \cite{MR860042}, the differential on $b_{20}$ is a $d_2$ rather than a $d_1$.} Theorem 6.3.14 of \cite{MR860042}. However, it is not known whether there exist any nonzero RMSS differentials for $p>n+1$.

Here is some explanation of how Ravenel-May differentials arise. The coproduct in $S(n,p)$ is given by 
\begin{align} \label{coprod formula 1} \Delta(t_i) &= \sum_{k=0}^i t_k\otimes t_{i-k}^{p^k}\end{align} 
for $i\leq n$, while the coproduct in $E_0S(n,p)$ is given by 
\begin{align} \label{coprod formula 2} \Delta(t_{i,j}) &= \sum_{k=0}^i t_{k,j}\otimes t_{i-k,j+k}\end{align} 
for $i\leq n$. The element $t_{i,j}$ in $E_0S(n,p)$ represents the element $t_i^{p^j}$ in $S(n,p)$, so \eqref{coprod formula 2} is a simple consequence of \eqref{coprod formula 1}. 
If $i>n$, then the coproduct on $t_i$ in $S(n,p)$, and on $t_{i,j}$ in $E_0S(n,p)$, is more complicated than the formulas \eqref{coprod formula 1} and \eqref{coprod formula 2}. However, if $p>n+1$, then it is the coproduct on $t_i$ for $i\leq n$ which is responsible for the $\Cotor$-groups, in the sense that the the cohomology of $L(n,n)$ agrees with $\Cotor$ over $E_0S(n,p)$, and the Lie bracket \eqref{lie bracket 1} in $L(n,n)$ is dual to the coproduct on elements $t_{i,j}$ in $E_0S(n,p)$ with $i\leq n$.

To summarize: the Hopf algebra $E_0S(n,p)$ can be generated by elements $\{ t_{i,j}\}$ which are representable by generators $\{ t_i^{p^j}\}$ for $S(n,p)$, and on the generators whose coproducts determine the $\Cotor$ groups, the coproduct in $E_0S(n,p)$ agrees with the coproduct in $S(n,p)$. At a glance, this makes it seem like $S(n,p)$ and $E_0S(n,p)$ must have the same $\Cotor$-groups. In fact, this was Ravenel's argument from \cite{ravenel1977cohomology}, which he later \cite{MR860042} pointed out was not correct. The trouble is the difference between the relations in $E_0S(n,p)$ and in $S(n,p)$. In $S(n,p)$, the $p^n$th power of each generator $t_i$ is equal to $t_i$ itself. Meanwhile, in $E_0S(n,p)$, the $p$th power of each generator $t_{i,j}$ is zero. 
As a consequence, a cocycle in the cobar complex for $E_0S(n,p)$ can involve linear combinations of tensor products of polynomials in the generators $t_{i,j}$ with the property that, upon applying the cobar complex differential $d$, each term involves a $p$th power of some generator, hence is zero. If we lift such an element to the cobar complex of $S(n,p)$ by replacing each instance of $t_{i,j}$ with $t_i^{p^j}$, it is an {\em a priori} possibility that applying $d$ to the resulting cochain does not yield zero, since $p$th powers of generators are no longer zero: for example in $S(1,p)$ we have $t_i^p = t_i$ instead of $t_i^p = 0$. 

This phenomenon---a cocycle in the cobar complex of $S(n,p)$ lifting to a non-cocycle in the cobar complex of $E_0S(n,p)$ due to the difference between the multiplicative relations in $S(n,p)$ and in $E_0S(n,p)$---would occur whenever there is a nonzero differential in the RMSS for $p>n+1$, if such differentials indeed exist.

\section{A simultaneous integral lift of Ravenel's height $n$ Lie algebras for all primes.}
\label{An integral lift of Ravenel...}

Fix a positive integer $n$. For each prime $p$, we have the Lie $\mathbb{F}_p$-algebra $L(n,n)$. There exists a simultaneous integral lift of all these Lie algebras, for fixed $n$:
\begin{definition}
Let $L_{int}(n,n)$ denote the $C_n$-equivariant Lie $\mathbb{Z}$-algebra with $\mathbb{Z}$-linear basis 
\[\left\{ x_{i,j}: i\in \{ 1, \dots ,n\}, j\in\mathbb{Z}/n\mathbb{Z}\right\}\]
where $C_n$ acts freely by letting $\sigma x_{i,j} = x_{i,j+1}$, and where the Lie bracket is defined by the rule \eqref{lie bracket 1}, above.
\end{definition}
While $L_{int}(n,n)$ is $n^2$-dimensional as a Lie $\mathbb{Z}$-algebra, it is only $n$-dimensional as a Lie $\mathbb{Z}[C_n]$-algebra. To be clear, Lie algebra cohomology {\em does} depend on the choice of ground ring\footnote{Consider how this goes in the more familiar situation over $\mathbb{F}_p$, rather than over $\mathbb{Z}$. The Lie algebra $L(2,2)$ is four-dimensional and unimodular, hence $H^4_{unr}(L(2,2);\mathbb{F}_p)$ is nontrivial, spanned by $h_{1,0}h_{1,1}h_{2,0}h_{2,1}$. However, if we consider $L(2,2)$ as a Lie $\mathbb{F}_p[C_2]$-algebra rather than as a Lie $\mathbb{F}_p$-algebra, then it is only two-dimensional, hence its Chevalley-Eilenberg complex (over $\mathbb{F}_p[C_2]$!) is trivial above cohomological dimension $2$, and certainly does not have vanishing $H^4$. It is the cohomology over $\mathbb{F}_p$, not over $\mathbb{F}_p[C_n]$, which is always considered in applications to stable homotopy.}, and throughout, whenever we speak of the cohomology of $L_{int}(n,n)$, we shall always mean its cohomology as a Lie $\mathbb{Z}$-algebra, not as a Lie $\mathbb{Z}[C_n]$-algebra. 

The Lie $\mathbb{Z}$-algebra $L_{int}(n,n)$ is free and finite-dimensional over $\mathbb{Z}$, and its reduction modulo a prime $p$ is Ravenel's Lie $\mathbb{F}_p$-algebra $L(n,n)$. We recall the relationship in cohomology between  a finite-dimensional $\mathbb{Z}$-free Lie $\mathbb{Z}$-algebra and its mod $p$ reduction. The author has never seen this relationship in print, nor heard it mentioned, so we offer a proof, but the proof is easy, and the result must surely be well-known. This relationship is used later, in the proof of Lemma \ref{ss map is surj}.
\begin{prop}\label{univ coeff seq}
Let $L$ be a Lie $\mathbb{Z}$-algebra which is finite-dimensional and free over $\mathbb{Z}$. Then, for each prime $p$ and each integer $m$, the natural map $H^m(L;\mathbb{Z})\otimes_{\mathbb{Z}}\mathbb{F}_p \rightarrow H^m(L\otimes_{\mathbb{Z}}\mathbb{F}_p; \mathbb{F}_p)$ fits into a short exact sequence
\begin{align}
\label{ses 203} 0 \rightarrow H^m(L;\mathbb{Z})\otimes_{\mathbb{Z}}\mathbb{F}_p \rightarrow H^m(L\otimes_{\mathbb{Z}}\mathbb{F}_p; \mathbb{F}_p) \rightarrow \Tor_1^{\mathbb{Z}}(H^{m+1}(L;\mathbb{Z}),\mathbb{F}_p) \rightarrow 0.
\end{align}
\end{prop}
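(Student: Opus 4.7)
The plan is to reduce this to the standard universal coefficient theorem applied to the Chevalley--Eilenberg cochain complex of $L$. Since $L$ is finite-dimensional and free over $\mathbb{Z}$, so is its $\mathbb{Z}$-linear dual $L^*$, and consequently each exterior power $\Lambda^m_{\mathbb{Z}}(L^*)$ is a finitely generated free abelian group. Write $C^{\bullet} := \Lambda^{\bullet}_{\mathbb{Z}}(L^*)$ for the Chevalley--Eilenberg complex computing $H^*(L;\mathbb{Z})$. Then $C^{\bullet}$ is a bounded cochain complex of finitely generated free $\mathbb{Z}$-modules.

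Next I would verify the base-change compatibility: the Chevalley--Eilenberg differential is defined by universal formulas (involving only the Lie bracket and its linear dual), so it commutes with extension of scalars. Consequently the natural map
\[ C^{\bullet} \otimes_{\mathbb{Z}} \mathbb{F}_p \;\longrightarrow\; \Lambda^{\bullet}_{\mathbb{F}_p}\bigl((L\otimes_{\mathbb{Z}}\mathbb{F}_p)^*\bigr) \]
is an isomorphism of cochain complexes, where the right-hand side is the Chevalley--Eilenberg complex of $L\otimes_{\mathbb{Z}}\mathbb{F}_p$. So $H^m(C^{\bullet}\otimes_{\mathbb{Z}}\mathbb{F}_p) \cong H^m(L\otimes_{\mathbb{Z}}\mathbb{F}_p;\mathbb{F}_p)$.

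With these identifications in hand, the sequence \eqref{ses 203} is precisely the universal coefficient theorem applied to the cochain complex $C^{\bullet}$ of free abelian groups, with the sign of the grading reversed relative to the usual (chain complex) statement. Explicitly, one factors $C^{\bullet}\otimes_{\mathbb{Z}}\mathbb{F}_p$ through a short exact sequence of cochain complexes arising from the coefficient sequence $0\to \mathbb{Z}\xrightarrow{p}\mathbb{Z}\to\mathbb{F}_p\to 0$, and the resulting long exact sequence of cohomology groups breaks up into the desired short exact sequence. The reason $\Tor_1^{\mathbb{Z}}$ lands on $H^{m+1}$ rather than $H^{m-1}$ is that the connecting map in the long exact sequence raises cohomological degree by one; equivalently, if one rewrites $C^{\bullet}$ as a chain complex by setting $C_{n} = C^{-n}$, the statement becomes the familiar chain-level UCT.

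There is no real obstacle here: the argument is a direct transport of the classical universal coefficient theorem across the fact that Lie algebra cohomology over a commutative ring $R$, for an $R$-free Lie algebra, is computed by the Chevalley--Eilenberg complex, which is functorial under flat base change. The only mildly delicate point is keeping straight that we are working with a cochain complex rather than a chain complex, so that the $\Tor_1^{\mathbb{Z}}$ term involves $H^{m+1}$ and not $H^{m-1}$.
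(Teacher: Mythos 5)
Your proposal is correct and is essentially the paper's own argument: both identify the Chevalley--Eilenberg complex of $L$ as a bounded cochain complex of finitely generated free abelian groups that commutes with base change (since exterior powers and dualization of finite free modules do), and then invoke the universal coefficient sequence for cochain complexes, with the $\Tor_1$ term correctly landing on $H^{m+1}$ because of the cohomological indexing. Nothing further is needed.
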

\begin{proof}
Recall that the Chevalley-Eilenberg complex of $L$ is the $\mathbb{Z}$-linear dual of the exterior $\mathbb{Z}$-algebra on $UL$. Exterior powers and the universal enveloping algebra functor $U$ each commute with base-change, so we have isomorphisms
\begin{align}
\label{iso 1230} \hom_{\mathbb{Z}}(\Lambda^{\bullet}_{\mathbb{Z}}(L),\mathbb{Z})\otimes_{\mathbb{Z}} \mathbb{F}_p 
  &\cong \hom_{\mathbb{Z}}(\Lambda^{\bullet}_{\mathbb{Z}}(L),\mathbb{F}_p) \\
\nonumber  &\cong \hom_{\mathbb{F}_p}(\Lambda^{\bullet}_{\mathbb{Z}}(L)\otimes_{\mathbb{Z}}\mathbb{F}_p,\mathbb{F}_p) \\
\label{iso 1231}  &\cong \hom_{\mathbb{F}_p}\left(\Lambda^{\bullet}_{\mathbb{F}_p}(L\otimes_{\mathbb{Z}}\mathbb{F}_p),\mathbb{F}_p\right) .
\end{align}
The cohomology of cochain complex \eqref{iso 1231} is $H^*(L\otimes_{\mathbb{Z}}\mathbb{F}_p;\mathbb{F}_p)$, while the cohomology of cochain complex \eqref{iso 1230} is describable by the universal coefficient sequence relating the cohomology of a base-changed cochain complex to the base-changed cohomology of the cochain complex. The resulting exact sequence is \eqref{ses 203}.
\end{proof}

\section{The Hopf algebra $\mathcal{Z}(n)$.}
\label{The Hopf algebra...}

To introduce the Hopf algebra $\mathcal{Z}(n)$, it is convenient to begin with some general, well-known observations about the relationship between Lie algebras, divided power algebras, and polynomial algebras. Suppose that $L$ is a Lie $\mathbb{Z}$-algebra which is free and finite-dimensional over $\mathbb{Z}$. For any free $\mathbb{Z}$-module $A$, we will write $A^*$ for its $\mathbb{Z}$-linear dual $\hom_{\mathbb{Z}}(A,\mathbb{Z})$.
The dual $(UL)^*$ of the universal enveloping algebra $UL$ is, as a ring\footnote{The {\em coproduct} on $(UL)^*$ depends on the Lie bracket of $L$, but the {\em product} on $(UL)^*$ is totally insensitive to the bracket.}, the divided power $\mathbb{Z}$-algebra on any $\mathbb{Z}$-linear basis for $L^*$. We write $\Gamma_{\mathbb{Z}}(L^*)$ for this divided power $\mathbb{Z}$-algebra. Inside the divided power algebra, we have the subring generated by $L^*$, i.e., the free commutative $\mathbb{Z}$-algebra $\mathbb{Z}[L^*]$ on the $\mathbb{Z}$-module $L^*$. Put another way, $\mathbb{Z}[L^*]$ is the polynomial ring with one generator for each element in a fixed $\mathbb{Z}$-linear basis for $L^*$. The inclusion of Hopf $\mathbb{Z}$-algebras $\mathbb{Z}[L^*]\hookrightarrow\Gamma_{\mathbb{Z}}(L^*)$ induces a map 
\begin{align}
\label{cotor map} \Cotor^*_{\mathbb{Z}[L^*]}(\mathbb{Z},\mathbb{Z}) &\hookrightarrow \Cotor^*_{\Gamma_{\mathbb{Z}}(L^*)}(\mathbb{Z},\mathbb{Z})
\end{align}
which is generally not an isomorphism, although it becomes an isomorphism after rationalization. It is a nice exercise to calculate that the domain of \eqref{cotor map} has a lot of torsion elements which map to zero in the codomain of \eqref{cotor map}.

Now we consider the special case in which $n$ is a positive integer and $L$ is the Lie $\mathbb{Z}$-algebra $L_{int}(n,n)$. We have an isomorphism of rings 
\begin{align}
\label{hopf alg iso 3409}
 (UL_{int}(n,n))^* &\cong \Gamma_{\mathbb{Z}}\left(t_{i,j}: i\in \{ 1, \dots ,n\}, j\in\mathbb{Z}/n\mathbb{Z}\right),
\end{align}
where $t_{i,j}$ denotes the dual element of $x_{i,j}$. 
It is a routine calculation (essentially the same as that of Theorem 6.3.2 of \cite{MR860042}) to show that, under the isomorphism \eqref{hopf alg iso 3409}, the coproduct on the Hopf $\mathbb{Z}$-algebra $(UL_{int}(n,n))^*$ yields the coproduct on $\Gamma_{\mathbb{Z}}\left(t_{i,j}: i\in \{ 1, \dots ,n\}, j\in\mathbb{Z}/n\mathbb{Z}\right)$ given by the formula
$\Delta(t_{i,j}) = \sum_{k=0}^i t_{k,j} \otimes t_{i-k,j+k}$.

We have explained how, within the divided power algebra $(UL)^*$, there is the natural polynomial algebra $\mathbb{Z}[L^*]$. We now define the Hopf algebra $\mathcal{Z}(n)$ to be precisely that polynomial subalgebra of $(UL_{int}(n,n))^*$.
\begin{definition}
Let $\mathcal{Z}(n)$ be the $C_n$-equivariant commutative Hopf $\mathbb{Z}$-algebra which, as a commutative ring, is the subring of $\Gamma_{\mathbb{Z}}\left(t_{i,j}: i\in \{ 1, \dots ,n\}, j\in\mathbb{Z}/n\mathbb{Z}\right)$ generated by the elements $t_{i,j}$. 

Consequently, $\mathcal{Z}(n)$ has the following presentation:
\begin{itemize}
\item As a commutative $\mathbb{Z}[C_n]$-algebra, $\mathcal{Z}(n)$ is free on generators $\lowert_1, \lowert_2,\dots ,\lowert_n$.
\item The coproduct on $\mathcal{Z}(n)$ is given by
$\Delta(\lowert_i) = \sum_{j=0}^{i} \lowert_j \otimes\ \sigma^jt_{i-j}$.
\item The augmentation on $\mathcal{Z}(n)$ is given by $\epsilon(\lowert_i) = 0$ for all $i$.
\item The map $\mathcal{Z}(n)\hookrightarrow \Gamma_{\mathbb{Z}}\left(t_{i,j}: i\in \{ 1, \dots ,n\}, j\in\mathbb{Z}/n\mathbb{Z}\right)$ sends $\sigma^jt_i$ to $t_{i,j}$.
\end{itemize}
\end{definition}

\begin{definition}\label{def of rav filt on Zn}
Given integers $n\geq 1$ and $q\geq 2$, the {\em $q$-Ravenel filtration on $\mathcal{Z}(n)$} is the increasing filtration in which, for all $j$, the element $\sigma^jt_i$ is in degree $d_{n,q,i}$.
\end{definition}

For a prime $p$, write $E_0^p\mathcal{Z}(n)$ for the associated graded Hopf $\mathbb{Z}$-algebra of the $p$-Ravenel filtration on $\mathcal{Z}(n)$.
\begin{observation}\label{q-rav triviality}
For all $n$ and $p$, the product and coproduct on $\mathcal{Z}(n)$ each strictly preserve the $p$-Ravenel filtration. Consequently $E_0^p\mathcal{Z}(n) = \mathcal{Z}(n)$.
\end{observation}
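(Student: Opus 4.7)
The plan is to reduce everything to the elementary identity $d_{n,p,i} = i$ for $1 \leq i \leq n$. This is immediate from the recursion: for $i \leq n$ we have $i-n \leq 0$, so $d_{n,p,i-n} = 0$, and the recursion $d_{n,p,i} = \max\{i, p\,d_{n,p,i-n}\}$ collapses to $d_{n,p,i} = i$. Consequently the $p$-Ravenel filtration on $\mathcal{Z}(n)$ is induced by a genuine $\mathbb{Z}_{\geq 0}$-grading in which the polynomial generator $\sigma^j t_i$ is homogeneous of weight $i$, and the observation amounts to saying that this grading is compatible with product and coproduct.

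For the product, this is automatic: by definition $\mathcal{Z}(n)$ is the polynomial $\mathbb{Z}$-algebra on the generators $\{\sigma^j t_i : 1 \leq i \leq n,\ j \in \mathbb{Z}/n\mathbb{Z}\}$, and multiplying monomials in a polynomial ring adds their weights exactly. For the coproduct, since $\Delta$ is a ring map, it suffices to verify strictness on generators. Applying $\sigma^j$ to the coproduct formula given in the definition of $\mathcal{Z}(n)$ yields
\[ \Delta(\sigma^j t_i) = \sum_{k=0}^{i} \sigma^j t_k \otimes \sigma^{j+k} t_{i-k}, \]
and for $0 \leq k \leq i \leq n$ each summand has bidegree $d_{n,p,k} + d_{n,p,i-k} = k + (i-k) = i = d_{n,p,i}$ in the tensor-product grading. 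Thus every term of $\Delta(\sigma^j t_i)$ lies in exact filtration degree $d_{n,p,i}$, establishing strictness, and the consequent identification $E_0^p\mathcal{Z}(n) = \mathcal{Z}(n)$ as a $C_n$-equivariant graded Hopf $\mathbb{Z}$-algebra then follows formally.

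There is no serious technical obstacle; the potential confusion is purely conceptual. The recursion defining $d_{n,p,i}$ is designed precisely so that, after reducing mod $p$ and imposing the relations $t_i^{p^n} = t_i$, the filtration reproduces Ravenel's filtration on $S(n,p)$, whose associated graded $E_0 S(n,p)$ is \emph{strictly smaller} than $S(n,p)$. The reason the associated graded genuinely changes things there but not here is that the nontrivial branch of the recursion, $p\,d_{n,p,i-n}$, only has bite once one is forced to re-express elements $t_i^{p^j}$ in terms of lower-index generators via $p^n$-th power relations. The ring $\mathcal{Z}(n)$ carries no such relations, so those indices $i>n$ simply never arise among its polynomial generators and the recursion remains in its trivial branch.
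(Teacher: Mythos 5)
Your proof is correct and is exactly the justification the paper leaves implicit (the statement appears as an unproved Observation): since $\mathcal{Z}(n)$ only has generators $\sigma^j t_i$ with $1\leq i\leq n$, the recursion gives $d_{n,p,i}=i$, the filtration is induced by an honest grading, and both the product and the coproduct formula $\Delta(\lowert_i)=\sum_k \lowert_k\otimes\sigma^k t_{i-k}$ are homogeneous for it. Your closing remark correctly identifies why this fails for $S(n,p)$ itself, namely the generators with $i>n$ and the filtration's non-multiplicative behavior on $p$-th powers forced by the relations $t_i^{p^n}=t_i$.
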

We have a map of $C_n$-equivariant Hopf algebras $\mathcal{Z}(n)\otimes_{\mathbb{Z}}\mathbb{F}_p \rightarrow S(n,p)$ given by sending $\sigma^jt_i$ to $t_i^{p^j}$. This map sends the $p$-Ravenel filtration on $\mathcal{Z}(n)$ to the Ravenel filtration on $S(n,p)$. Each of these two filtrations yields a May-type spectral sequence. Hence we have a map of spectral sequences
\begin{equation}\label{ss map 1}\xymatrix{
 \Cotor^{*,*,*}_{E_0^p\left( \mathcal{Z}(n)\otimes_{\mathbb{Z}}\mathbb{F}_p\right)}(\mathbb{F}_p,\mathbb{F}_p) \ar@{=>}[r]\ar[d] &
  \Cotor^{*,*}_{\mathcal{Z}(n)\otimes_{\mathbb{Z}}\mathbb{F}_p}(\mathbb{F}_p,\mathbb{F}_p) \ar[d] \\
 \Cotor^{*,*,*}_{E_0S(n,p)}(\mathbb{F}_p,\mathbb{F}_p) \ar@{=>}[r] &
  \Cotor^{*,*}_{S(n,p)}(\mathbb{F}_p,\mathbb{F}_p) .
}\end{equation}

\section{The Ravenel-May spectral sequence collapses immediately at large primes.}
\label{RMSS collapse section}

Finally, in this section we use the definitions in the preceding sections to actually prove something new. 
\begin{lemma}\label{ss map is surj}
Fix a positive integer $n$. Then there exists an integer $N$ such that, if $p>N$, then the left-hand vertical map in diagram \eqref{ss map 1} is surjective. 
\end{lemma}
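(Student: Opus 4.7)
The plan is to take $N=n+1$ and exhibit the vertical map as a split surjection by producing an explicit section. Two simplifications are immediate. First, by Observation \ref{q-rav triviality}, the domain is just $\Cotor^{*,*}_{\mathcal{Z}(n)\otimes_{\mathbb{Z}}\mathbb{F}_p}(\mathbb{F}_p,\mathbb{F}_p)$. Second, by the collapse of the Ravenel-Lie-May spectral sequence \eqref{lmss 2} in the range $p>n+1$, the codomain is isomorphic to $H^*(L(n,n);\mathbb{F}_p)$. So the target of the argument is to construct a map $H^*(L(n,n);\mathbb{F}_p)\to \Cotor^{*,*}_{\mathcal{Z}(n)\otimes_{\mathbb{Z}}\mathbb{F}_p}(\mathbb{F}_p,\mathbb{F}_p)$ whose composition with the vertical map is the identity on $H^*(L(n,n);\mathbb{F}_p)$.

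The section is induced by anti-symmetrization from the Chevalley-Eilenberg complex. Concretely, I would send a monomial $h_{i_1,j_1}\wedge\cdots\wedge h_{i_r,j_r}$ in $\Lambda^\bullet(L(n,n)^*)$ to the alternating sum
\[\sum_{\pi\in\Sigma_r}\mathrm{sgn}(\pi)\,(\sigma^{j_{\pi(1)}}\lowert_{i_{\pi(1)}})\otimes\cdots\otimes(\sigma^{j_{\pi(r)}}\lowert_{i_{\pi(r)}})\]
in the cobar complex $C^\bullet(\mathcal{Z}(n)\otimes_{\mathbb{Z}}\mathbb{F}_p)$. The main verification is that this is a chain map. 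This reduces to checking a single formula on linear cochains: the reduced coproduct $\bar\Delta(\sigma^j\lowert_i)=\sum_{k=1}^{i-1}(\sigma^j\lowert_k)\otimes(\sigma^{j+k}\lowert_{i-k})$ in $\mathcal{Z}(n)$, when anti-symmetrized and dualized, recovers precisely the Lie bracket \eqref{lie bracket 1} on $L(n,n)$. Crucially, the truncation $i+k\leq n$ in \eqref{lie bracket 1} is automatic here: $\mathcal{Z}(n)$ has no generator $\sigma^\bullet \lowert_k$ for $k>n$, so no such term ever appears in the coproduct on our chosen linear generators.

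Next I would observe that post-composing this anti-symmetrization with the vertical map of the lemma produces the analogous anti-symmetrization chain map $\Lambda^\bullet(L(n,n)^*)\to C^\bullet(E_0S(n,p))$, simply because the Hopf algebra map $\mathcal{Z}(n)\otimes_{\mathbb{Z}}\mathbb{F}_p\to E_0S(n,p)$ sends $\sigma^j\lowert_i\mapsto t_{i,j}$. For $p>n+1$, that latter anti-symmetrization realizes the inverse of the RLMSS edge isomorphism $H^*(L(n,n);\mathbb{F}_p)\cong\Cotor^{*,*}_{E_0S(n,p)}(\mathbb{F}_p,\mathbb{F}_p)$, so the overall composite is the identity on $H^*(L(n,n);\mathbb{F}_p)$, and surjectivity of the vertical map follows.

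The hardest step to pin down rigorously is probably this last identification---that anti-symmetrization into the cobar complex of $E_0S(n,p)$ is indeed the edge map of the RLMSS for $p>n+1$. This is essentially folklore, stemming from the fact that the Chevalley-Eilenberg complex of the Lie algebra of primitives of a cocommutative primitively generated Hopf algebra maps quasi-isomorphically into its cobar complex via anti-symmetrization. Making this rigorous for the restricted Lie algebra $P(E_0S(n,p)^*)$ and its quotient $L(n,n)$ requires unwinding May's filtration of the cobar complex and Ravenel's tensor splitting (Theorem 1.6 of \cite{ravenel1977cohomology}), but in the range $p>n+1$ the polynomial factor of the Lie-May $E_1$ is trivial, so the verification reduces to a routine comparison of cochain representatives.
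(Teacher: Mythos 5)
Your key claim---that the na\"{i}ve anti-symmetrization $h_{i_1,j_1}\wedge\cdots\wedge h_{i_r,j_r}\mapsto\sum_\pi\mathrm{sgn}(\pi)\,(\sigma^{j_{\pi(1)}}\lowert_{i_{\pi(1)}})\otimes\cdots\otimes(\sigma^{j_{\pi(r)}}\lowert_{i_{\pi(r)}})$ is a chain map into the cobar complex---is false, and this sinks the argument. The coproduct on $\mathcal{Z}(n)$ is not cocommutative: dual to the \emph{ordered} products in $UL_{int}(n,n)$, the reduced coproduct of a linear generator is a sum of ordered rank-one tensors, e.g.\ $\bar\Delta(\sigma^j\lowert_2)=(\sigma^j\lowert_1)\otimes(\sigma^{j+1}\lowert_1)$, a single term. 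Meanwhile $d_{CE}(h_{2,j})=-h_{1,j}\wedge h_{1,j+1}$ maps under your prescription to $-\bigl([t_{1,j}|t_{1,j+1}]-[t_{1,j+1}|t_{1,j}]\bigr)$, which differs from $d[t_{2,j}]=-[t_{1,j}|t_{1,j+1}]$ by the nonzero cochain $[t_{1,j+1}|t_{1,j}]$. That discrepancy is a coboundary (half of $d[t_{1,j}t_{1,j+1}]$ corrects it), but it is not zero, so the square does not commute at the cochain level; and the correction terms needed to repair this in higher degrees are exactly the ``lower-filtration'' product terms with rational denominators. Controlling which primes divide those denominators is the entire content of the lemma, and your proposal never engages with it.

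This is also why your claimed bound $N=n+1$ should have set off alarms: the paper's Remark \ref{Nn remark} states explicitly that the proof is nonconstructive and that $N_n=n+1$ (the ``optimistic answer'') is open, suspected false for $n=9$, $p=11$. The paper's actual route is to choose cocycle representatives for a basis of $H^*(\mathbb{Q}\otimes L_{int}(n,n);\mathbb{Q})$ in the rationalized cobar complex, clear the finitely many denominators (both in the coefficients and in the divided powers) to land in $C^\bullet\mathcal{Z}(n)$, and then invoke Proposition \ref{univ coeff seq} plus finite generation of $H^*(L_{int}(n,n);\mathbb{Z})$ to conclude that these classes reduce to a basis of $\Cotor_{E_0S(n,p)}(\mathbb{F}_p,\mathbb{F}_p)$ for all but finitely many $p$. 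If you want to salvage your approach, you would need to construct the corrected quasi-isomorphism $\Lambda^\bullet(L(n,n)^*)\to C^\bullet(\mathcal{Z}(n)\otimes\mathbb{F}_p)$ with explicit control of denominators, which is a genuinely harder (and currently open) problem than the asymptotic statement being proved.
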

\begin{proof}
Choose a $\mathbb{Q}$-linear basis $\mathcal{B}$ for the rational Lie algebra cohomology $H^*(\mathbb{Q}\otimes_{\mathbb{Z}} L_{int}(n,n);\mathbb{Q})$. For each element $b\in \mathcal{B}$, choose a cocycle representative $\tilde{b}$ for $b$ in the cobar complex of the $\mathbb{Q}$-linear dual $U(\mathbb{Q}\otimes_{\mathbb{Z}}L_{int}(n,n))^*$ of the universal enveloping $\mathbb{Q}$-algebra of $\mathbb{Q}\otimes_{\mathbb{Z}}L_{int}(n,n)$. By exactness of rationalization, that cobar complex is the rationalization of the cobar complex of the $\mathbb{Z}$-linear dual $(UL_{int}(n,n))^*$ of the universal enveloping $\mathbb{Z}$-algebra of $L_{int}(n,n)$. 

Since $\mathcal{B}$ is finite, we may clear denominators to pass from the rationalization of the cobar complex of $(UL_{int}(n,n))^*$ to the cobar complex of $(UL_{int}(n,n))^*$ itself. Write $D$ for the least common multiple of the denominators of the chosen elements $\{ \tilde{b}: b\in\mathcal{B}\}$ of the cobar complex of $(\mathbb{Q}\otimes_{\mathbb{Z}}UL_{int}(n,n))^* \cong \mathbb{Q}\otimes_{\mathbb{Z}} \Gamma_{\mathbb{Z}}(L_{int}(n,n)^*)$. Write $D\cdot \tilde{\mathcal{B}}$ for the subset $\{ D\cdot \tilde{b} : b\in \mathcal{B}\}$ of $\Gamma_{\mathbb{Z}}(L_{int}(n,n)^*)$. 

Finally, for each element $D\cdot \tilde{b}$, there exist only finitely many denominators in the expression of $D\cdot \tilde{b}$ in terms of divided powers of the elements of $L_{int}(n,n)^*$. Hence, again using the finiteness of $\mathcal{B}$, we may also clear the denominators in all those divided powers. That is, there exists a positive integer $E$ such that 
\begin{align*}
 E\cdot D\cdot \tilde{\mathcal{B}}
  &:= \{ E\cdot D\cdot \tilde{b} : b\in \mathcal{B}\} \\
  &\subseteq \mathbb{Z}[L_{int}(n,n)^*] \\
  &\subseteq \Gamma_{\mathbb{Z}}(L_{int}(n,n)^*).
\end{align*}

For any prime $p>n+1$, Ravenel proved (see \cref{Review of Ravenel...}, above) that the map of Hopf $\mathbb{F}_p$-algebras 
\begin{align*}
 (UL(n,n))^* &\rightarrow 
 E_0S(n,p)\\
 t_{i,j} &\mapsto t_{i,j}
\end{align*}
induces an isomorphism 
\begin{align}
\label{iso 23240}\Cotor_{(UL(n,n))^*}^*(\mathbb{F}_p,\mathbb{F}_p) &\rightarrow \Cotor_{E_0S(n,p)}^*(\mathbb{F}_p,\mathbb{F}_p).\end{align}
The domain of \eqref{iso 23240} is the Lie algebra cohomology $H^*(L(n,n);\mathbb{F}_p)$. We now make use of Proposition \ref{univ coeff seq}: since $L_{int}(n,n)$ is a finite-dimensional Lie $\mathbb{Z}$-algebra, its cohomology $H^*(L_{int}(n,n);\mathbb{Z})$ is a finitely generated abelian group, hence has $\ell$-torsion for only finitely many primes $\ell$. Hence, if we choose $p$ sufficiently large, then the $\Tor_1$-group in Proposition \ref{univ coeff seq} vanishes, yielding an isomorphism 
\begin{align*}
 H^*(L_{int}(n,n);\mathbb{Z})\otimes_{\mathbb{Z}}\mathbb{F}_p &\stackrel{\cong}{\longrightarrow} H^*(L(n,n);\mathbb{F}_p).\end{align*} 

The set of cocycles $E\cdot D\cdot \tilde{\mathcal{B}}$ in the cobar complex of $\mathcal{Z}(n) = \mathbb{Z}[L_{int}(n,n)^*]$ is finite, and each member $E\cdot D\cdot \tilde{b}$ of $E\cdot D\cdot \tilde{\mathcal{B}}$ is a sum of rank $1$ tensors, each of which is a tensor of elements in $\mathbb{Z}[L_{int}(n,n)^*]$, each of which has only finitely many coefficients in $\mathbb{Z}$. The point is that there can be only finitely many primes that divide any of these coefficients in any of these rank $1$ tensors in any element $E\cdot D\cdot \tilde{b}$ of $E\cdot D \cdot \tilde{\mathcal{B}}$. Consequently, if $p$ is sufficiently large, then the only $\mathbb{Z}$-linear combinations $\alpha_1 E\cdot D\cdot \tilde{b}_1 + \dots + \alpha_m E\cdot D\cdot \tilde{b}_m$ of the members of $E\cdot D\cdot \tilde{\mathbb{B}}$ which map to zero in $H^*(L_{int}(n,n)\otimes_{\mathbb{Z}}\mathbb{F}_p;\mathbb{F}_p)$ are those in which all the coefficients $\alpha_1, \dots ,\alpha_m$ are divisible by $p$. 

Consequently, for $p>>n$, the set of cocycles $E\cdot D \cdot \tilde{\mathcal{B}}$ in the cobar complex of $\mathcal{Z}(n)$ maps to a $\mathbb{F}_p$-linear basis for $\Cotor_{E_0S(n,p)}^*(\mathbb{F}_p,\mathbb{F}_p)$. Consequently the classes in $\Cotor_{\mathcal{Z}(n)\otimes_{\mathbb{Z}}\mathbb{F}_p}^*(\mathbb{F}_p,\mathbb{F}_p)$ represented by the members of $E\cdot D\cdot \tilde{\mathcal{B}}$ surject on to $\Cotor_{E_0S(n,p)}^*(\mathbb{F}_p,\mathbb{F}_p)$. Using Observation \ref{q-rav triviality} to identify $\mathcal{Z}(n)\otimes_{\mathbb{Z}}\mathbb{F}_p$ and $E_0^p(\mathcal{Z}(n)\otimes_{\mathbb{Z}}\mathbb{F}_p) \cong (E_0^p\mathcal{Z}(n))\otimes_{\mathbb{Z}}\mathbb{F}_p$, the left-hand vertical map in \eqref{ss map 1} is surjective.
\end{proof}

\begin{theorem}\label{main thm 1}
Fix a positive integer $n$. Then there exists some integer $N$ such that, for all $p>N$, the Ravenel-May spectral sequence 
\begin{align}
\nonumber  E_1^{*,*,*} 
 \cong \Cotor^*_{E_0S(n,p)}(\mathbb{F}_p,\mathbb{F}_p) 
 &\Rightarrow \Cotor^*_{S(n,p)}(\mathbb{F}_p,\mathbb{F}_p) 
\end{align}
has no nonzero differentials.
\end{theorem}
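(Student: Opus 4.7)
The plan is to derive Theorem \ref{main thm 1} by comparing the Ravenel-May spectral sequence to the upper spectral sequence in diagram \eqref{ss map 1}, exploiting two facts already established: by Observation \ref{q-rav triviality}, the upper spectral sequence collapses trivially at $E_1$, and by Lemma \ref{ss map is surj}, for $p$ sufficiently large the left vertical arrow in \eqref{ss map 1} is surjective on $E_1$-pages. Combining these with naturality of the differentials forces the RMSS to collapse.

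First I would fix $N$ large enough that the conclusion of Lemma \ref{ss map is surj} applies for every prime $p > N$. Next, I would invoke Observation \ref{q-rav triviality}: since the $p$-Ravenel filtration on $\mathcal{Z}(n)$ is strictly preserved by product and coproduct, the associated graded Hopf $\mathbb{Z}$-algebra $E_0^p \mathcal{Z}(n)$ coincides with $\mathcal{Z}(n)$ itself. After base change to $\mathbb{F}_p$, the $E_1$-page of the upper spectral sequence in \eqref{ss map 1} is therefore canonically identified with its abutment $\Cotor^{*,*}_{\mathcal{Z}(n)\otimes_{\mathbb{Z}}\mathbb{F}_p}(\mathbb{F}_p,\mathbb{F}_p)$, so every class on that $E_1$-page is a permanent cycle and every differential $d_r$ with $r \geq 1$ vanishes in the upper row.

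The conclusion is then a short diagram chase. Given any class $x$ on the $E_1$-page of the Ravenel-May spectral sequence, Lemma \ref{ss map is surj} produces a class $y$ on the $E_1$-page of the upper spectral sequence in \eqref{ss map 1} mapping to $x$ under the left vertical arrow. Since the upper spectral sequence has no nonzero differentials, $y$ is a permanent cycle; by naturality of the differentials with respect to the map of spectral sequences, its image $x$ is a permanent cycle in the RMSS. Because $x$ was arbitrary, every class on $E_1^{*,*,*}$ of the RMSS is a permanent cycle, which is precisely the statement that the RMSS has no nonzero differentials.

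The main obstacle has essentially already been cleared by the construction of $\mathcal{Z}(n)$ and the proof of Lemma \ref{ss map is surj}: producing enough integral cocycle representatives in the cobar complex of $\mathcal{Z}(n)$ whose reductions modulo $p$ hit a basis of $\Cotor^*_{E_0 S(n,p)}(\mathbb{F}_p,\mathbb{F}_p)$ for $p \gg n$ is the delicate step. Given that lemma, the theorem reduces to naturality of the filtration spectral sequence combined with the tautological degeneration of the upper spectral sequence in \eqref{ss map 1}. The only point requiring care in writing it up is confirming that the identification $E_0^p \mathcal{Z}(n) = \mathcal{Z}(n)$ at the Hopf-algebra level really does give identification of the $E_1$-page with the abutment, but this is automatic since a trivially filtered cobar complex tautologically degenerates at $E_1$.
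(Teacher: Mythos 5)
Your proposal is correct and follows essentially the same route as the paper's proof: invoke Lemma \ref{ss map is surj} to get surjectivity of the left vertical map in \eqref{ss map 1} for $p\gg n$, note via Observation \ref{q-rav triviality} that the top spectral sequence degenerates at $E_1$, and conclude by naturality (the elementary fact that a spectral sequence receiving a surjection from a degenerate one must itself degenerate). The diagram chase you spell out is exactly the content the paper leaves as ``elementary to see.''
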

\begin{proof}
Using Lemma \ref{ss map is surj}, choose $p$ large enough that the left-hand vertical map in \eqref{ss map 1} is surjective. The spectral sequence drawn along the top horizontal edge of diagram \eqref{ss map 1} collapses immediately with no differentials, by Observation \ref{q-rav triviality}. It is elementary to see that, if a map of spectral sequence is surjective on some page, and the domain spectral sequence has no nonzero differentials on and after that page, then the codomain spectral sequence also has no nonzero differentials on and after that page.
Hence the spectral sequence drawn along the bottom horizontal edge of \eqref{ss map 1}---i.e., the Ravenel-May spectral sequence---collapses with no nonzero differentials at the $E_1$-page.
\end{proof}

\begin{theorem}\label{main thm 2}
Fix a positive integer $n$. Then there exists some integer $N$ such that, for all $p>N$, the mod $p$ continuous cohomology $H^*_c(\strict\Aut(\mathbb{G}_{1/n});\mathbb{F}_p)$ of the height $n$ strict Morava stabilizer group scheme is isomorphic, as a graded ring, to the cohomology $H^*(L(n,n);\mathbb{F}_p)$ of the solvable Lie $\mathbb{F}_p$-algebra $L(n,n)$.
\end{theorem}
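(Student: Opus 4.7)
The plan is to deduce Theorem \ref{main thm 2} from Theorem \ref{main thm 1} by resolving the multiplicative extensions in the Ravenel filtration, using the integral machinery built up in \cref{The Hopf algebra...}. For $p>N_n$, Theorem \ref{main thm 1} together with the multiplicativity of the RMSS yields an isomorphism of bigraded $\mathbb{F}_p$-algebras
\[
\mathrm{gr}^{\mathrm{Rav}}H^*_c(\strict\Aut(\mathbb{G}_{1/n});\mathbb{F}_p) \;\cong\; E_\infty \;=\; E_1 \;\cong\; H^*(L(n,n);\mathbb{F}_p),
\]
and in particular the equality $\dim_{\mathbb{F}_p}H^{*,*}_c = \dim_{\mathbb{F}_p}H^{*,*}(L(n,n);\mathbb{F}_p)$ in every bidegree. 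The remaining task is to upgrade this associated-graded identification to an honest graded-ring isomorphism.

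Set $A := \Cotor^*_{\mathcal{Z}(n)\otimes_{\mathbb{Z}}\mathbb{F}_p}(\mathbb{F}_p,\mathbb{F}_p)$. The Hopf algebra maps $\mathcal{Z}(n)\otimes_{\mathbb{Z}}\mathbb{F}_p \to E_0 S(n,p)$ and $\mathcal{Z}(n)\otimes_{\mathbb{Z}}\mathbb{F}_p \to S(n,p)$ induce ring maps $\psi\colon A\to H^*(L(n,n);\mathbb{F}_p)$ and $\phi\colon A\to H^*_c$, which are the two vertical maps in diagram \eqref{ss map 1}. Lemma \ref{ss map is surj} furnishes surjectivity of $\psi$, and combining this with the RMSS collapse and a filtration-induction argument on the abutment shows $\phi$ is surjective as well. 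The plan is to verify $\ker\phi=\ker\psi$, at which point the first isomorphism theorem applied separately to $\phi$ and $\psi$ yields $H^*_c \cong A/\ker\phi = A/\ker\psi \cong H^*(L(n,n);\mathbb{F}_p)$ as graded $\mathbb{F}_p$-algebras.

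For the inclusion $\ker\phi\subseteq\ker\psi$, the key observation is that $\psi$ coincides with the associated graded of $\phi$ with respect to the Ravenel filtration. By Observation \ref{q-rav triviality} the $p$-Ravenel filtration on $A$ is actually a grading, so any $x\in A$ decomposes uniquely as $x=\sum_u x_u$ with $x_u$ of Ravenel degree $u$. Since $\phi$ is filtration-preserving, $\phi(x_u)$ lies in Ravenel filtration at most $u$ of $H^*_c$, and the commutativity of diagram \eqref{ss map 1} (viewing both rows as spectral sequences) identifies the class of $\phi(x_u)$ in $E_\infty^{*,*,u}\subseteq \mathrm{gr}^{\mathrm{Rav}}H^*_c$ with $\psi(x_u)$. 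Hence $\phi(x)=0$ in $H^*_c$ implies $\sum_u\psi(x_u)=0$ in $\bigoplus_u E_\infty^{*,*,u}$; because the summands lie in distinct Ravenel trigradings, each $\psi(x_u)=0$, whence $\psi(x)=0$. The reverse inclusion then follows from the bidegreewise dimension equality $\dim A_{s,t} - \dim(\ker\phi)_{s,t} = \dim H^{s,t}_c = \dim H^{s,t}(L(n,n);\mathbb{F}_p) = \dim A_{s,t} - \dim(\ker\psi)_{s,t}$, forcing the already-established inclusion to be an equality.

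The hardest step is the identification of $\psi$ with the associated graded of $\phi$: this is a compatibility between the two convergent spectral sequences in diagram \eqref{ss map 1}, using that the top spectral sequence degenerates trivially by Observation \ref{q-rav triviality} and that the bottom spectral sequence collapses by Theorem \ref{main thm 1}. Once this compatibility is carefully set up, the remainder of the argument is essentially formal linear algebra and the first isomorphism theorem.
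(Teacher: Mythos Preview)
Your overall strategy coincides with the paper's: invoke Theorem~\ref{main thm 1} for the additive statement, then use the map of spectral sequences \eqref{ss map 1}, the surjectivity from Lemma~\ref{ss map is surj}, and Observation~\ref{q-rav triviality} to handle the multiplicative structure. The paper's argument for the ring isomorphism is terser than yours: rather than routing through an equality of kernels, it simply asserts that since the top spectral sequence in \eqref{ss map 1} has no multiplicative filtration jumps (because $\mathcal{Z}(n)=E_0^p\mathcal{Z}(n)$) and the $E_1$-map is surjective, the bottom spectral sequence has no multiplicative filtration jumps either.

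Your route through $\ker\phi=\ker\psi$ contains a genuine gap. The implication ``$\phi(x)=0$ in $H^*_c$ implies $\sum_u\psi(x_u)=0$ in $\bigoplus_u E_\infty^{*,*,u}$'' does not follow from what precedes it and is false in general. Writing $\phi(x)=\sum_u\phi(x_u)$ with $\phi(x_u)\in F_u H^*_c$, the vanishing of this sum in $H^*_c$ does \emph{not} force each leading class $[\phi(x_u)]=\psi(x_u)\in\mathrm{gr}^u H^*_c$ to vanish: a term $\phi(x_u)$ may lie in strictly lower filtration (so $\psi(x_u)=0$) and there cancel against some $\phi(x_{u'})$ with $u'<u$ whose associated-graded class $\psi(x_{u'})$ is nonzero. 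Concretely, take $A=k[x,y]/(x^3,xy,y^2)$ with $\deg y=1$, $\deg x=2$; take $B=k[p]/p^3$ filtered by $F_0=k\subset F_1=k+kp^2\subset F_2=B$; set $\phi(x)=p$, $\phi(y)=p^2$. Then $A$ is connected graded, $\phi$ is a filtered ring surjection, and $\mathrm{gr}\,\phi$ is surjective, yet $x^2-y\in\ker\phi$ while $\psi(x^2-y)=0-[p^2]\neq 0$. So $\ker\phi\not\subseteq\ker\psi$ (and in fact neither inclusion holds), and the subsequent dimension count cannot repair this. Your identification $\psi=\mathrm{gr}\,\phi$ is correct; what fails is the step treating ``pass to associated graded'' as though it commuted with summing elements of different Ravenel degrees.
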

\begin{proof}
For $p>n+1$, we have the RMSS $H^*(L(n,n);\mathbb{F}_p)\Rightarrow H^*_c(\strict\Aut(\mathbb{G}_{1/n});\mathbb{F}_p)$. By Theorem \ref{main thm 1}, the spectral sequence collapses with no differentials for $p>>n$, so for sufficiently large $p$, $H^*(L(n,n);\mathbb{F}_p)$ and $H^*_c(\strict\Aut(\mathbb{G}_{1/n});\mathbb{F}_p)$ are isomorphic as graded abelian groups.

We need to show that $H^*(L(n,n);\mathbb{F}_p)$ and $H^*_c(\strict\Aut(\mathbb{G}_{1/n});\mathbb{F}_p)$ are also isomorphic as graded {\em rings.} The RMSS is a spectral sequence of algebras, so we only need to verify that there are no multiplicative filtration jumps in the abutment of the spectral sequence. The argument here is much the same as used in the proof of Theorem \ref{main thm 1}: we have the map of spectral sequences \eqref{ss map 1}, which is surjective on the $E_1$-page for $p>>n$, by Lemma \ref{ss map is surj}. The domain of the map of spectral sequences collapses immediately with no nonzero differentials, and furthermore has no multiplicative filtration jumps, since $\mathcal{Z}(n) = E_0^p\mathcal{Z}(n)$ by Observation \ref{q-rav triviality}. Hence for $p>>n$, there are no multiplicative filtration jumps.
\end{proof}

\begin{corollary}\label{smith-toda cor}
Let $n$ be a positive integer. 
Suppose that the Smith-Toda complex $V(n-1)$ exists for all $p>>n$. Equip $H^*(L(n,n);\mathbb{F}_p)$ with the Adams grading, i.e., the grading in which an element in cohomological degree $d$ and internal degree $i$ is in Adams degree $i-d$.
Then, for $p>>n$, the $K(n)$-local homotopy groups $\pi_*(L_{K(n)}V(n-1))$ of $V(n-1)$ are isomorphic as a graded abelian group to $H^*(L(n,n);\mathbb{F}_p)\otimes_{\mathbb{F}_p}\mathbb{F}_p[v_n^{\pm 1}]$, with $v_n$ in degree $2(p^n-1)$ and with the Adams degree as the grading on $H^*(L(n,n);\mathbb{F}_p)$.

If it is furthermore true that $V(n-1)$ is a homotopy-associative ring spectrum for $p>>n$, then the isomorphism $\pi_*(L_{K(n)}V(n-1)) \cong H^*(L(n,n);\mathbb{F}_p)\otimes_{\mathbb{F}_p}\mathbb{F}_p[v_n^{\pm 1}]$ for $p>>n$ is an isomorphism of graded rings.
\end{corollary}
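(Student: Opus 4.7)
The plan is to combine Theorem~\ref{main thm 2} with the $K(n)$-local spectral sequence already noted in the introduction,
\[ E_2^{s,*} \cong H^s_c(\strict\Aut(\mathbb{G}_{1/n});\mathbb{F}_p)\otimes_{\mathbb{F}_p} \mathbb{F}_p[v_n^{\pm 1}] \Longrightarrow \pi_{*-s}(L_{K(n)}V(n-1)). \]
The tensor-product form of the $E_2$-term is standard: since $V(n-1)$ kills $v_0, v_1, \dots, v_{n-1}$, the relevant Morava $E$-theory coefficients reduce to $\mathbb{F}_p[v_n^{\pm 1}]$ with $v_n$ fixed under the stabilizer action. Applying Theorem~\ref{main thm 2} then identifies this $E_2$-page with $H^*(L(n,n);\mathbb{F}_p)\otimes_{\mathbb{F}_p}\mathbb{F}_p[v_n^{\pm 1}]$ as a bigraded ring for $p\gg n$, equipped with the Adams grading described in the statement.

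The heart of the argument is to prove immediate collapse via a sparseness estimate. Because $L(n,n)$ is $n^2$-dimensional, its cohomology is concentrated in cohomological degrees $0\leq s\leq n^2$; moreover, the generators $h_{i,j}$ of $H^1(L(n,n);\mathbb{F}_p)$ sit in Adams degree $2(p^i-1)p^j-1$ for $1\leq i\leq n$ and $0\leq j<n$, while $v_n$ has Adams degree $2(p^n-1)$. Reducing modulo $v_n$-periodicity, the Adams degrees of $E_2$-classes form a finite subset $T_n\subseteq \mathbb{Z}/2(p^n-1)$ whose minimum consecutive gap grows at least linearly in $p$, by a routine congruence analysis of sums $\sum_k 2(p^{i_k}-1)p^{j_k} - s \pmod{2(p^n-1)}$. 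The spectral sequence differentials satisfy $d_r\colon (s,t-s)\mapsto (s+r,t-s-1)$, and since cohomological degrees are bounded by $n^2$, only values $r\leq n^2$ are relevant. Once $p$ is large enough that the minimum gap in $T_n$ exceeds $n^2$, no such $d_r$ can connect two nonzero classes, forcing collapse.

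When $V(n-1)$ additionally carries a homotopy-associative multiplication, the spectral sequence is multiplicative, so the $E_\infty$-ring structure matches the $E_2$-ring structure computed above. The only remaining concern is ruling out hidden multiplicative filtration jumps in the abutment, which would manifest as two distinct filtrations hosting nonzero classes of the same Adams degree; this is precluded by the same sparseness bound used for collapse, upgrading the isomorphism of graded abelian groups to one of graded rings. I expect the main technical obstacle to be making the sparseness estimate explicit enough to locate a workable threshold $N$ in terms of $n$, which amounts to a combinatorial lower bound on consecutive gaps in $T_n$; this bound should be close to, but not identical with, the $(n^2+n+2)/2$-style threshold appearing in the introduction.
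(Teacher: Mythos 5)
Your overall route is exactly the paper's: run the $E(n)$-Adams spectral sequence for $V(n-1)$, use the Morava--Miller--Ravenel change-of-rings isomorphism to identify its $E_2$-page with $H^*_c(\strict\Aut(\mathbb{G}_{1/n});\mathbb{F}_p)\otimes_{\mathbb{F}_p}K(n)_*$, invoke Theorem \ref{main thm 2} to rewrite this as $H^*(L(n,n);\mathbb{F}_p)\otimes_{\mathbb{F}_p}\mathbb{F}_p[v_n^{\pm 1}]$, and then kill all differentials and multiplicative filtration jumps by sparseness. The gap is in your sparseness step as stated. The set $T_n$ of occupied Adams degrees modulo $2(p^n-1)$, taken without reference to the cohomological filtration $s$, has minimum consecutive gap equal to $1$ for every $n$ and every $p$: the unit occupies stem $0$, while the nonzero class $\zeta_n=\sum_j h_{n,j}\in H^1(L(n,n);\mathbb{F}_p)$ has internal degree $\equiv 0 \pmod{2(p^n-1)}$ and hence stem $\equiv -1$ (at $n=1$ this is $h_{1,0}$; at $n=2$ it is the class $\zeta_2$ of \cref{ht 2 example}). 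So no threshold on $p$ ever makes the gap in $T_n$ exceed $n^2$, and the inference ``gap $>n^2$ implies no $d_r$'' also conflates two different axes: the gap is measured in the stem direction, while $n^2$ bounds the filtration direction that $d_r$ moves along.

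The correct version of your congruence analysis must retain the filtration. Every $t_{i,j}$ has internal degree $2(p^i-1)p^j\equiv 0\pmod{2(p-1)}$, so the $s$-line of the $E_2$-page is concentrated in Adams degrees $\equiv -s\pmod{2p-2}$. Since $d_r$ raises $s$ by $r$ and lowers the Adams degree by exactly $1$, a nonzero $d_r$ forces $r\equiv 1\pmod{2p-2}$, hence $r\geq 2p-1$ because $r\geq 2$; combined with the horizontal vanishing line at $s=n^2+n$, all differentials vanish once $p>\frac{n^2+n+2}{2}$. The same congruence rules out hidden multiplicative extensions, since two nonzero classes in the same stem must lie on $s$-lines differing by a multiple of $2p-2$, which again overshoots the vanishing line. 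This is precisely the folklore bidegree argument the paper records; with that correction your proof coincides with it, and your closing expectation of a threshold ``close to but not identical with $(n^2+n+2)/2$'' resolves to exactly that value.
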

\begin{proof}
Whenever $V(n-1)$ exists, we have a spectral sequence 
\begin{align}
\label{ass 1} E_2^{s,t} \cong \left(\Cotor_{S(n,p)}^{s,*}(\mathbb{F}_p,\mathbb{F}_p)\otimes_{\mathbb{F}_p} K(n)_*\right)^t 
  &\Rightarrow \pi_{t-s}(L_{K(n)}V(n-1))\\
\nonumber d_r: E_r^{s,t} &\rightarrow E_r^{s+r,t+r-1}.
\end{align}
Spectral sequence \eqref{ass 1} is most simply constructed as the $E(n)$-Adams spectral sequence of $V(n-1)$, which converges to $\pi_*(L_{E(n)}V(n-1)) \cong \pi_*(L_{K(n)}V(n-1))$. The Morava-Miller-Ravenel change-of-rings isomorphism (see \cite{MR0458410} or \cite{MR782555} or section 6.1 of \cite{MR860042}) identifies the $E_2$-page $\Cotor_{E(n)_*E(n)}^{*,*}(E(n)_*,E(n)_*V(n-1))$ of the $E(n)$-Adams spectral sequence for $V(n-1)$ with the continuous cohomology $H^*_c(\strict\Aut(\mathbb{G}_{1/n});K(n)_*) \cong H^*_c(\strict\Aut(\mathbb{G}_{1/n});\mathbb{F}_p)\otimes_{\mathbb{F}_p} K(n)_*$.

The $s$-line in the $E_2$-term of spectral sequence \eqref{ass 1} is concentrated in Adams degrees congruent to $-s$ modulo $2p-2$. If $p>\frac{n^2+n+2}{2}$, then an elementary bidegree argument shows that any $E(n)$-Adams differential would have to cross the horizontal vanishing line at $s=n^2+n$, hence must be zero. The same bidegree argument also rules out multiplicative filtration jumps in the $E(n)$-Adams spectral sequence when $p>\frac{n^2+n+2}{2}$. (This argument for collapse of spectral sequence \eqref{ass 1} for $p>\frac{n^2+n+2}{2}$ is folklore, and certainly not new. What is new here is that Theorems \ref{main thm 1} and \ref{main thm 2} identify the input for \eqref{ass 1} in terms of Lie algebra cohomology.)
\end{proof}

We report one more corollary of Theorems \ref{main thm 1} and \ref{main thm 2}:
\begin{corollary}\label{new ss cor}
Let $n$ be a positive integer, and let $p$ be a prime satisfying $p>N_n$. Suppose that $X$ is an $E(n-1)$-acyclic finite CW-complex. Then there exist strongly convergent spectral sequences
\begin{align}
\label{new ss 1} E_1^{*,*,*} \cong H^*(L(n,n);\mathbb{F}_p)\otimes_{\mathbb{F}_p} E_0E(n)_*(X)
  &\Rightarrow \Cotor_{E(n)_*E(n)}^{*,*}(E(n)_*,E(n)_*(X)), \\
\label{En-ass 1} E_2^{*,*} \cong \Cotor_{E(n)_*E(n)}^{*,*}(E(n)_*,E(n)_*(X))
  &\Rightarrow \pi_*(L_{E(n)}X),
\end{align}
where $E_0E(n)_*(X)$ is the associated graded abelian group of the $p$-adic filtration on $E(n)_*(X)$. 
\end{corollary}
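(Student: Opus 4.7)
The second spectral sequence \eqref{En-ass 1} is the $E(n)$-based Adams spectral sequence for $X$; its existence and strong convergence to $\pi_*(L_{E(n)}X)$ for finite $X$ are standard (see chapter 4 of \cite{MR860042}). The real work is in constructing and identifying \eqref{new ss 1}.

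For \eqref{new ss 1}, the plan is to obtain it as a filtration spectral sequence on the cobar complex of the Hopf algebroid $(E(n)_*, E(n)_*E(n))$ with coefficients in $E(n)_*(X)$, where the filtration combines the $p$-adic filtration (on both $E(n)_*(X)$ and on $E(n)_*E(n)$) with a lift of Ravenel's filtration on the mod-$p$ reduction $E(n)_*E(n)/p$. Finiteness of $X$ forces $E(n)_*(X)$ to be finitely generated over $E(n)_*$, so that the combined filtration is exhaustive and Hausdorff-complete, yielding a strongly convergent spectral sequence abutting to $\Cotor_{E(n)_*E(n)}^{*,*}(E(n)_*, E(n)_*(X))$.

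To identify the $E_1$-page, first pass to the associated graded with respect to the $p$-adic filtration, producing the cobar complex of $E(n)_*E(n)/p$ with coefficients in the $\mathbb{F}_p$-vector space $E_0 E(n)_*(X)$. The $E(n-1)$-acyclicity hypothesis on $X$ yields $I_n$-nilpotence of $E(n)_*(X)$, which permits the Morava-Miller-Ravenel change-of-rings theorem (\cite{MR0458410}, \cite{MR782555}, or section 6.1 of \cite{MR860042}) to identify this $\Cotor$ with one over the Morava stabilizer algebra $S(n,p)$ with coefficients in $E_0 E(n)_*(X)$. Passing further to associated graded under Ravenel's filtration on $S(n,p)$, and using that the strict Morava stabilizer group acts trivially on $\mathbb{F}_p[v_n^{\pm 1}]$ (so that the residual coaction on $E_0 E(n)_*(X)$ is annihilated on the associated graded), the $\Cotor$ of the associated graded factors as $H^*_c(\strict\Aut(\mathbb{G}_{1/n}); \mathbb{F}_p) \otimes_{\mathbb{F}_p} E_0 E(n)_*(X)$. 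For $p > N_n$, Theorem \ref{main thm 2} replaces the first tensor factor with $H^*(L(n,n); \mathbb{F}_p)$, giving the claimed form of $E_1$.

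The main obstacle is the careful interlocking of these filtrations --- the $p$-adic filtration on coefficients and cobar terms, the Morava-Miller-Ravenel change-of-rings setup, and Ravenel's filtration on the Morava stabilizer algebra --- and especially the verification that the coaction on $E_0 E(n)_*(X)$ genuinely becomes trivial at the appropriate stage, so that the $\Cotor$ really does factor as a tensor product rather than only as an abstract abelian group extension.
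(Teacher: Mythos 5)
Your treatment of \eqref{En-ass 1} matches the paper's (the $E(n)$-Adams spectral sequence, strongly convergent for finite $X$ by the vanishing line). For \eqref{new ss 1}, however, there is a genuine gap, and it sits exactly at the point you yourself flag as ``the main obstacle'': the triviality of the coaction on $E_0E(n)_*(X)$. The fact that the stabilizer group scheme acts trivially on $K(n)_*=\mathbb{F}_p[v_n^{\pm 1}]$ says nothing about the $\Sigma(n)$-comodule structure of the associated graded of $E(n)_*(X)$, which can perfectly well be a nontrivial comodule; and passing to the Ravenel-filtration associated graded of $S(n,p)$ does not ``annihilate'' that coaction either (it would only replace $\Cotor_{S(n,p)}$ by $\Cotor_{E_0S(n,p)}$, still with coefficients in a possibly nontrivial comodule). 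Without triviality of the coefficients, the factorization $\Cotor_{S(n,p)}(\mathbb{F}_p, E_0E(n)_*(X))\cong \Cotor_{S(n,p)}(\mathbb{F}_p,\mathbb{F}_p)\otimes_{\mathbb{F}_p}E_0E(n)_*(X)$ is simply unavailable, so your identification of the $E_1$-page does not go through as written.

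The missing idea in the paper's argument is pro-unipotence of $\Spec S(n,p)$, verified via the filtration criterion of \cite[section 8.3]{MR547117} applied to Ravenel's coproduct formula. Unipotence forces every irreducible continuous representation to be one-dimensional and trivial, so the $(p,v_1,\dots,v_{n-1})$-adic filtration on $E(n)_*(X)/(1-v_n)$ (finite because $X$ is finite and $E(n-1)$-acyclic) refines to a finite filtration by subcomodules with \emph{trivial} one-dimensional quotients. The spectral sequence \eqref{new ss 1} is then the spectral sequence of this filtration of the \emph{coefficient comodule} alone---no filtration of $E(n)_*E(n)$ or of $S(n,p)$ enters its construction---and its $E_1$-page is computed by Morava--Miller--Ravenel change-of-rings followed by the now-legitimate tensor factorization, with Theorem \ref{main thm 2} converting $\Cotor_{S(n,p)}(\mathbb{F}_p,\mathbb{F}_p)$ into $H^*(L(n,n);\mathbb{F}_p)$. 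Note also that your proposal to build the $p$-adic and Ravenel filtrations into a single combined filtration of the cobar complex would, even if repaired, produce a different spectral sequence (with extra Ravenel-May-type differentials with coefficients) from the one the paper constructs; the hypothesis $p>N_n$ in the statement is there precisely because the paper routes the Lie-algebra identification through Theorem \ref{main thm 2} rather than through the Ravenel filtration.
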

\begin{proof}
Spectral sequence \eqref{En-ass 1} is, of course, not new: it is simply the $E(n)$-Adams spectral sequence for $X$, which converges strongly to $\pi_*(L_{E(n)}X)$ by the existence of a horizontal vanishing line at a finite page in the $E(n)$-Adams spectral sequence of every finite CW-complex; see \cite[Theorem A.6.11]{MR1192553}. If $p>n+1$, then the category of $E(n)_*E(n)$-comodules has relative-injective dimension $n^2+n$, so this vanishing line occurs already at the $E_2$-page.

Recall that $\Sigma(n)$ is the Hopf $\mathbb{F}_p$-algebra 
\begin{align*}
 \Sigma(n) &= K(n)_*\otimes_{BP_*}BP_*BP\otimes_{BP_*}K(n)_* \\
 &= E(n)_*E(n)/(p,v_1, \dots ,v_{n-1}, \eta_R(v_1), \dots ,\eta_R(v_{n-1})) \\
 &= E(n)_*E(n)/(p,v_1, \dots ,v_{n-1}).
\end{align*}
Spectral sequence \eqref{new ss 1} arises from a four-step process:
\begin{itemize}
\item First, filter $E(n)_*(X)$ by powers of the ideal $(p, v_1, \dots ,v_{n-1})$ in $E(n)_*$, so that the associated graded $E(n)_*E(n)$-comodule is a $\Sigma(n)$-comodule. This filtration is finite because we have assumed that $X$ is finite and $E(n-1)$-acyclic.
\item Recall (e.g. from the beginning of section 6.2 of \cite{MR860042}) that $\mathbb{Z}$-graded $\Sigma(n)$-comodules are equivalent to $\mathbb{Z}/2(p^n-1)\mathbb{Z}$-graded $S(n,p)$-comodules, via the isomorphism $\Sigma(n)/(1-v_n) \cong S(n,p)$. 
\item Now observe that the pro-$p$-group scheme represented by $\Spec S(n,p)$---i.e., the height $n$ strict Morava stabilizer group scheme---is pro-unipotent. To see this, recall the presentation \eqref{snp pres 1} for $S(n,p)$. For a positive integer $m$, let $S(n,p)_{\leq m}$ be the Hopf subalgebra $\mathbb{F}_p[t_1, \dots ,t_m]/(t_i^{p^n}-t_i\ \forall i)$ of $S(n,p)$. Then $\Spec S(n,p)_{\leq m}$ is the strict automorphism group scheme of the formal $m$-bud truncation of the formal group law $\mathbb{G}_{1/n}$. The affine group scheme $\Spec S(n,p)$ is the limit, over $m$, of the affine group schemes $\Spec S(n,p)_{\leq m}$, so our task is to show that $\Spec S(n,p)_{\leq m}$ is unipotent for each $m$.

We use the filtration criterion for unipotence: see \cite[section 8.3]{MR547117} for an excellent textbook reference. The filtration criterion states that, if $k$ is a field and $A$ is a finitely generated commutative Hopf $k$-algebra, then $\Spec A$ is unipotent if and only if there is a chain of $k$-linear subspaces $A_0 \subseteq A_1 \subseteq A_2 \subseteq \dots$ with $A_0 =k$ and with $\Delta(x) \in \sum_{i=0}^j A_i\otimes_k A_{j-i}$ for all $x\in A_j$. This condition is satisfied by $S(n,p)$, hence also\footnote{Both $S(n,p)$ and $S(n,p)_{\leq m}$ satisfy the filtration criterion for unipotence. We introduce $S(n,p)_{\leq m}$ because it is finitely generated, and as far as we know, unipotence is only studied when an affine group scheme is algebraic, i.e., when the representing Hopf algebra of the group scheme is finitely generated. In particular, the author does not know whether the filtration criterion for unipotence guarantees that the only irreducible comodules are one-dimensional, except when the representing Hopf algebra is finitely generated.} by $S(n,p)_{\leq m}$, by Ravenel's recursive formula \cite[Theorem 4.3.13]{MR860042} for the coproduct in $BP_*BP$ and hence also in $S(n,p)_{\leq m}$. 
\item Since $\Spec S(n,p)$ is pro-unipotent, the only irreducible continuous representations of $\Spec S(n,p)$ are one-dimensional (e.g. see exercises 5-6 in chapter 9 of \cite{MR547117}). Hence the $(p, \dots ,v_{n-1})$-adic filtration on $E(n)_*(X)/(1-v_n)$ admits a finite refinement by subcomodules whose filtration quotients are each one-dimensional. The relationship between $\Sigma(n)$-comodules and $S(n,p)$-comodules lets us lift this filtration to a finite filtration of $E(n)_*(X)$ by $E(n)_*E(n)$-subcomodules whose filtration quotients are each copies of the trivial $\Sigma(n)$-comodule $K(n)_*$.

Consequently the spectral sequence arising from applying\linebreak $\Cotor_{E(n)_*E(n)}^{*,*}(E(n)_*,-)$ to this filtration has $E_1$-page 
\begin{align}
\label{iso a1} \Cotor_{E(n)_*E(n)}^{*,*}(E(n)_*,E_0E(n)_*(X))
  &\cong  \Cotor_{\Sigma(n)}^{*,*}(K(n)_*,E_0E(n)_*(X)) \\
\nonumber
  &\cong  \Cotor_{\Sigma(n)}^{*,*}(K(n)_*,K(n)_*)\otimes_{K(n)_*} E_0E(n)_*(X) \\
\nonumber
  &\cong  \left(H^*(L(n,n);\mathbb{F}_p) \otimes_{\mathbb{F}_p} K(n)_*\right)\otimes_{K(n)_*} E_0E(n)_*(X) \\
\nonumber  &\cong  H^*(L(n,n);\mathbb{F}_p) \otimes_{\mathbb{F}_p} E_0E(n)_*(X) ,
\end{align}
where \eqref{iso a1} is from the Morava-Miller-Ravenel change-of-rings isomorphism. 
Here $E_0E(n)_*(X)$ is the associated graded of the finite filtration we have constructed on $E(n)_*(X)$ using pro-unipotence of $\Spec S(n,p)$, but as an abelian group, it agrees with the associated graded of the $p$-adic filtration on $E(n)_*(X)$. Strong convergence of the spectral sequence is a routine consequence of finiteness of the filtration.
\end{itemize}
\end{proof}

\begin{remark}\label{Nn remark}
The proof of Lemma \ref{ss map is surj} is nonconstructive: given a positive integer $n$, there is no obvious way to write down an integer $M$ such that the left-hand vertical map in \eqref{ss map 1} is surjective for all primes $p>M$. However, all existing calculations are consistent with the possibility that $M$ can be chosen to be simply $1$: that is, the left-hand vertical map in \eqref{ss map 1} is surjective for all $n$ and all $p$. 

Theorems \ref{main thm 1} and \ref{main thm 2} inherit their nonconstructiveness from Lemma \ref{ss map is surj}. For a fixed choice of $n$, let $N_n$ denote the least integer $N\geq n+1$ satisfying the conditions described in the statement of Theorem \ref{main thm 1}. We would very much like to have some understanding of the sequence of integers $N_1, N_2,N_3,\dots$. In \cref{ht 1 example}, we see that $N_1 = 2$, and in \cref{ht 2 example}, we see that $N_2=3$. The ``optimistic answer'' described in \cref{Introduction.} is the proposition that $N_n = n+1$ for all $n\geq 1$. All existing calculations are consistent with the possibility that the RMSS collapses for all $p>n+1$, i.e., $N_n = n+1$, although as Ravenel remarks following Theorem 6.3.5 of \cite{MR860042}, Ravenel suspects that this possibility is dashed by a hypothetical RMSS differential for $n=9, p=11$.
\end{remark}

\section{Explicit example calculations.}
\label{examples section}

The mod $p$ reductions of the integral calculations in this section can be compared with those of section 3 of \cite{ravenel1977cohomology} or section 6.3 of \cite{MR860042}, to see that we indeed recover known cases of the mod $p$ cohomology of Morava stabilizer algebras.

\subsection{Height $1$.}
\label{ht 1 example}

The Lie $\mathbb{Z}$-algebra $\mathcal{Z}(1)$ is the abelian Lie $\mathbb{Z}$-algebra on a single generator $x_1$. Consequently we have $H^*(\mathcal{Z}(1);\mathbb{Z})\cong \Lambda_{\mathbb{Z}}(h_{10})$. Upon reduction modulo $p$ for $p$ sufficiently large, we recover the cohomology $\Lambda_{\mathbb{F}_p}(h_{10})$ of the height $1$ strict Morava stabilizer group; in fact, $p>2$ suffices.

\subsection{Height $2$.}
\label{ht 2 example}

The Lie $\mathbb{Z}$-algebra $\mathcal{Z}(2)$ has $\mathbb{Z}$-linear basis $x_{10},x_{11},x_{20},x_{21}$, with Lie bracket determined by $[x_{10},x_{11}] = x_{20} - x_{21}$ and by $x_{20},x_{21}$ being central in $\mathcal{Z}(2)$. Running the Cartan-Serre spectral sequence for the extension of Lie $\mathbb{Z}$-algebras
\[ 1 \rightarrow \mathbb{Z}\{ x_{20},x_{21}\} \rightarrow \mathcal{Z}(2) \rightarrow \mathbb{Z}\{ x_{10},x_{11}\} \rightarrow 1,\]
one calculates easily that 
\begin{align*}
 H^*(\mathcal{Z}(2);\mathbb{Z})
 &\cong \Lambda_{\mathbb{Z}}(\zeta_2) \otimes_{\mathbb{Z}}\left(\Lambda_{\mathbb{Z}}(h_{10},h_{11})\otimes_{\mathbb{Z}}\mathbb{Z}[h_{10}h_{20},h_{11}h_{20}]
\right)/ \\ & \ \ \ \ \ \ \ \left(h_{10}\cdot h_{11}, (h_{10}h_{20})^2, (h_{11}h_{20})^2, \right. \\ 
 & \ \ \ \ \ \ \ \ \ \ \ \ \ \ \left. (h_{10}h_{20})\cdot h_{11} = -(h_{11}h_{20})\cdot h_{10}, h_{10}h_{20}\cdot h_{11}h_{20}\right),
\end{align*} where $\zeta_2$ is the cohomology class of the Chevalley-Eilenberg cocycle $h_{20} + h_{21}$. The action of $C_2$ is by 
\begin{align*}
 \sigma(h_{10}) &= h_{11} &\ \ \ \ \ \ \ \sigma(h_{11}) &= h_{10} \\
 \sigma(h_{10}h_{20}) &= h_{11}h_{20} - h_{11}\zeta_2 &\ \ \ \ \ \ \ \sigma(h_{11}h_{20}) &= h_{10}h_{20} - h_{10}\zeta_2 \\
 \sigma(\zeta_2) &= \zeta_2. &&
\end{align*}
Upon reduction modulo $p$ for $p$ sufficiently large, we recover the cohomology of the height $2$ strict Morava stabilizer group (compare \cite[Theorem 3.2]{ravenel1977cohomology} or \cite[Theorem 6.3.22]{MR860042}); in fact, $p>3$ suffices.

\appendix 
\section{Simultaneous integral lift of the height $n$ Morava stabilizer algebras at all primes.}
\label{A simultaneous integral lift...}

In this appendix, for each positive integer $n$, we define a differential graded $\mathbb{Z}$-algebra $S_{int}(n)^{\bullet}$ which, in a sense that we make precise, ``specializes'' at each prime $p$ to the $p$-primary height $n$ Morava stabilizer algebra $S(n,p)$. We do not need $S_{int}(n)^{\bullet}$ for the main theorems in this paper, because the smaller object $\mathcal{Z}(n)$ suffices for our purposes. This is because the main theorems in this paper are asymptotic: they are about the cohomology of the $p$-primary height $n$ strict Morava stabilizer groups for $p>>n$. For large $p$, the Hopf algebra $\mathcal{Z}(n)$ is an integral lift of a large enough piece of the Morava stabilizer algebras $S(n,p)$ to detect all the cohomology of $S(n,p)$ at large primes $p$. Readers with any interest in small-primary phenomena will perhaps want to know about an integral lift of the height $n$ Morava stabilizer algebras which ``gets the cohomology right'' even at small primes. That integral lift is $S_{int}(n)^{\bullet}$.

\subsection{The definition of the integral lift $S_{int}(n)^{\bullet}$.}
\label{The integral lift...}

It would be nice to have a single Hopf $\mathbb{Z}$-algebra $S_{int}(n)$ whose mod $p$ reduction, for each $p$, is isomorphic to $S(n,p)$. We are skeptical that such a Hopf $\mathbb{Z}$-algebra can exist. Instead we will construct a differential graded $\mathbb{Z}$-algebra $S_{int}(n)^{\bullet}$ whose reduction at each prime $p$ is the cobar complex of $S(n,p)$. We give the construction in several steps, as follows:
\begin{itemize}
\item Let $R(n)$ denote the free commutative $\mathbb{Z}[C_n]$-algebra on a set of generators $\{ \lowert_1, \lowert_2, \dots\}$. 
\item Let $T(n)^{\bullet}$ denote the differential graded $\mathbb{Z}[C_n]$-algebra whose degree $i$ summand is the $i$-fold tensor power $R(n)^{\tensor_{\mathbb{Z}} i}$ of $R(n)$. We refer to $R(n)^{\tensor_{\mathbb{Z}} i}$ as the group of {\em $i$-cochains} in $T(n)^{\bullet}$. 

We define the $C_n$-action on $T(n)^{\bullet}$ by letting the operator $\sigma$ act diagonally on $R(n)^{\tensor_{\mathbb{Z}} i}$. Hence, for example, $\sigma(\lowert_j\otimes \lowert_k) = (\sigma t_j)\otimes (\sigma t_k)$. 

The graded algebra $T(n)^{\bullet}$ has a certain feature in common with the cobar complex of a Hopf algebra (or Hopf algebroid), as defined in appendix 1 of \cite{MR860042}: it has {\em two} natural-looking multiplication operations, and unless care is taken to distinguish them, there will be confusion and errors. The degree $i$ summand $R(n)^{\tensor_{\mathbb{Z}} i}$ of $T(n)^{\bullet}$ is, in itself, a ring, so we can multiply two $i$-cochains and get an $i$-cochain. We will call this multiplication the {\em internal product} on $T(n)^{\bullet}$. By contrast, we can instead take an element $x\in R(n)^{\tensor_{\mathbb{Z}} i}$ and an element $y\in R(n)^{\tensor_{\mathbb{Z}} j}$, and concatenate them across a tensor symbol to get an element $x\otimes y\in R(n)^{\tensor_{\mathbb{Z}} (i+j)}$. We will call this multiplication the {\em cup product} on $T(n)^{\bullet}$. For the sake of producing a differential graded algebra, it is the cup product on $T(n)^{\bullet}$ which is the relevant multiplication operation.
\item Let $S_{int}(n)^{\bullet}$ be the graded $\mathbb{Z}[C_n]$-algebra obtained from $T(n)^{\bullet}$ by freely (in the category of associative graded $\mathbb{Z}[C_n]$-algebras) adjoining generators $\lowerb_1, \lowerb_2, \lowerb_3, \dots$ in degree $2$.
\item We define a $\mathbb{Z}[C_n]$-linear differential $d: S_{int}(n)^{\bullet}\rightarrow S_{int}(n)^{\bullet}$ of degree $+1$ by the following rules:
\begin{align}
\label{dt formula} d(\lowert_i) &= \sum_{j=1}^{i-1} \lowert_j \otimes \sigma^jt_{i-j} + \sum_{k=1}^{i/n} \sigma^{kn+1} b_{i-kn}, \\
\label{db formula} d(\lowerb_i) &= -\sigma^{-1-kn}\left( \sum_{j=1}^{i+n-1} d(\lowert_j\otimes \sigma^j t_{i+n-j}) + \sum_{k=1}^{\frac{i}{n}-1} d(\lowerb_{i-nk})\right)
\end{align}
with the understanding that $\lowerb_i$ is zero if $i\leq 0$.

The formula \eqref{db formula} is not a closed-form formula for $d(\lowerb_i)$, but it is enough to let us solve recursively for $d(\lowerb_i)$ by first calculating $d(\lowerb_1),d(\lowerb_2)$, and so on. 
\end{itemize}
Note that \eqref{db formula} is a consequence of \eqref{dt formula}: since $d\circ d$ must be zero, apply $d$ to both sides of \eqref{dt formula} and solve for $d(\lowerb_{i-n})$, then re-index to arrive at formula \eqref{db formula}. 
\begin{definition}\label{def of int lift alg}
For each positive integer $n$, the {\em integral lift of the height $n$ Morava stabilizer algebra} is the $C_n$-equivariant differential graded $\mathbb{Z}$-algebra $S_{int}(n)^{\bullet}$ defined above. 
\end{definition}

The cobar complex of $\mathcal{Z}(n)$ embeds into $S_{int}(n)^{\bullet}$ by the differential graded $\mathbb{Z}[C_n]$-algebra homomorphism $\mathcal{Z}(n)\rightarrow S_{int}(n)^{\bullet}$ sending $\lowert_i$ to $\lowert_i$ and preserving both the internal product and the cup product.


\subsection{How to specialize $S_{int}(n)^{\bullet}$ to a prime.}
\label{Specialization of...}

We now need to explain the sense in which $S_{int}(n)^{\bullet}$ is an integral lift of the Morava stabilizer algebras $S(n,2)$, $S(n,3)$, $S(n,5),\dots$. It is not a matter of simply reducing the $\mathbb{Z}$-algebra $S_{int}(n)^{\bullet}$ modulo a prime $p$ to recover $S(n,p)$. Instead, to recover the cobar complex of $S(n,p)$, one must ``specialize'' $S_{int}(n)^{\bullet}$ in the following way:
\begin{definition}
Fix a positive integer $n$ and a prime $p$. 
By the {\em specialization of $S_{int}(n)^{\bullet}$ at $p$,} we mean the $C_n$-equivariant differential graded $\mathbb{F}_p$-algebra obtained by:
\begin{itemize}
\item setting $p$ to zero,
\item setting $\sigma$ to the internal Frobenius endomorphism, i.e., setting $\sigma(x) = x^p$ for all homogeneous $x$, {\em where the $p$th power is taken using the internal product}, as defined in \cref{The integral lift...},
\item and, for each positive integer $i$, setting $\lowerb_i$ to $-1$ times the transpotent of $\lowert_i$,
i.e., setting $\lowerb_i$ to the mod $p$ reduction of the integral sum $-\frac{1}{p} \sum_{j=1}^{p-1}\binom{p}{j} \lowert_i^j\otimes \lowert_i^{p-j}$.
\end{itemize}
We will write $S_{int}(n)^{\bullet}\downarrow p$ for the specialization of $S_{int}(n)^{\bullet}$ at $p$.
\end{definition}
Write $C_{S(n,p)}^{\bullet}$ for the cobar complex of $S(n,p)$. Then $C_{S(n,p)}^{\bullet}$ is a $C_n$-equivariant differential graded $\mathbb{F}_p$-algebra, with $C_n$ acting by the internal Frobenius map $x \mapsto x^p$, i.e., the zeroth algebraic Steenrod operation $P^0$, as defined in \cite[appendix 1 section 5]{MR860042}. 
It is straightforward to use the calculations of sections 4.3 and 6.3 of \cite{MR860042} to verify that the map of $C_n$-equivariant differential graded algebras 
\begin{align*}
 S_{int}(n)^{\bullet}\downarrow p &\rightarrow C_{S(n,p)}^{\bullet} \\
 \lowert_i &\mapsto t_i \\
 \lowerb_i &\mapsto b_{i,0} \end{align*}
is an isomorphism.


\subsection{The $q$-Ravenel filtration on $S_{int}(n)^{\bullet}$.}
\label{The Ravenel filtration on...}

To show that the Ravenel-May spectral sequence also has an integral lift, even at small primes, we define a lift of the Ravenel filtration to the integral lift $S_{int}(n)^{\bullet}$ of the Morava stabilizer algebras. In fact we will have a family of such lifts, one lift for each integer $q\geq 2$. It is defined in the same way as the $q$-Ravenel filtration on $\mathcal{Z}(n)$, from Definition \ref{def of rav filt on Zn}:

\begin{definition}
For integers $n\geq 1$ and $q\geq 2$, let $d_{n,q,i}$ be the integer defined recursively by the rule 
\begin{align*}
 d_{n,q,i} &= \left\{ \begin{array}{ll} 0 &\mbox{\ if\ } i\leq 0 \\ \max\{ i,qd_{n,q,i-n}\} &\mbox{\ if\ } i>0.\end{array}\right.
\end{align*}
Now equip the $C_n$-equivariant differential graded $\mathbb{Z}$-algebra $S_{int}(n)^{\bullet}$ with the increasing filtration in which, for all $j$, the element $\sigma^jt_i$ is in degree $d_{n,q,i}$ and the element $\sigma^jb_i$ is in degree $d_{n,q,i+n}$.
We call this the {\em $q$-Ravenel filtration on $S_{int}(n)^{\bullet}$.}
\end{definition}

In the case $p=q$, the $q$-Ravenel filtration on $S_{int}(n)^{\bullet}$ specializes (in the sense of \cref{Specialization of...}) to the Ravenel filtration on $S(n,p)$.

\def\cprime{$'$} \def\cprime{$'$} \def\cprime{$'$} \def\cprime{$'$}

\end{document}